\newtheorem{thm}{Theorem}[section]
\newtheorem{lem}[thm]{Lemma}
\newtheorem{prop}[thm]{Proposition}
\theoremstyle{remark}
\newtheorem{rem}{Remark}[section]
\def\f{\frac}
 \def\a{{\alpha}} 
 \def\b{{\beta}}
 \def\g{{\gamma}}
 \def\t{{\theta}}
 \def\l{{\lambda}}
 \def\la{{\langle}}
 \def\ra{{\rangle}}
 \def\xb{{\mathbf x}}
 \def\C{{\mathcal C}}
 \def\CI{{\mathcal I}}
 \def\CK{{\mathcal K}}
 \def\CL{{\mathcal L}}
 \def\CO{{\mathcal O}}
 \def\CP{{\mathcal P}}
 \def\CV{{\mathcal V}}
 \def\CW{{\mathcal W}}
 \def\NN{{\mathbb N}}
 \def\PP{{\mathbb P}}
 \def\QQ{{\mathbb Q}}
 \def\RR{{\mathbb R}}
\newcommand{\wh}{\widehat}
\newif\ifpdf
\begin{document}
 
\title[Minimal cubature rules and interpolation]
{Minimal Cubature rules and polynomial interpolation 
in two variables II}
\author{Yuan Xu} 
\address{Department of Mathematics\\ University of Oregon\\
    Eugene, Oregon 97403-1222.}\email{yuan@math.uoregon.edu}

\date{\today}
\keywords{Cubature, Gaussian, minimal cubature, polynomial interpolation, two variables,
Lebesgue constant}
\subjclass[2000]{41A05, 65D05, 65D32}
\thanks{The work was supported in part by NSF Grant DMS-1510296}
 
\begin{abstract}
As a complement to \cite{X12}, minimal cubature rules of degree $4m+1$ for the weight functions 
$$
\CW_{\a,\b,\pm \frac12}(x,y) = |x+y|^{2\a+1} |x-y|^{2\b+1} ((1-x^2)(1-y^2))^{\pm \frac12}
$$
on $[-1,1]^2$ are shown to exist and near minimal cubature rules of the same degree with one node more
than minimal are constructed explicitly. The Lagrange interpolation polynomials on the nodes of the 
near minimal cubature rules are also studied. 
\end{abstract}

\maketitle

\section{Introduction}
\setcounter{equation}{0}

Let $W$ be a non-negative weight function on a domain $\Omega \in \RR^2$. A cubature formula of degree 
$s$ for the integral with respect to $W$ is a finite sum satisfying 
\begin{equation} \label{eq:CF}
   \int_\Omega f(x,y) W(x,y) = \sum_{k=1}^N \l_k f(x_k,y_k), \qquad \forall f \in \Pi_s^2,
\end{equation}
where $\Pi_s^2$ denotes the space of polynomials of degree at most $s$ in two variables, and there exists 
at least one function $f^* \in \Pi_{s+1}^2$ for which the identity fails to hold. 
For a fixed $n$, a minimal cubature rule of degree $2n+1$ has the smallest number, $N$, of nodes among 
all cubature rules of the same degree. They are of interests in several aspects and provide important
tools for various problems in approximation and numerical computation. 

Minimal cubature rules are known explicitly, with their nodes and weights given by 
close formulas, only in a few cases. One of them is for the family of integrals with respect to the weight functions 
\begin{equation} \label{CWab}
 \CW_{\a,\b,\pm \frac12}(x,y) : = |x+y|^{2 \a+1} | x- y|^{2 \beta +1}
          (1-x^2)^{\pm \frac12}(1-y^2)^{\pm \frac12} 
\end{equation}
with $\a,\b > -1$ on the square $[-1,1]^2$, for which the minimal cubature rules of degree $2n-1$ are 
explicitly constructed in \cite{X12} when $n = 2m$. This includes the classical result of the product Chebyshev
weight function (when $\a=\b = \-1/2$) studied in \cite{MP} as a special case. The cubature rules in \cite{X12}
are closely tied to Gaussian cubature rules on a domain bounded by two lines and a parabola. The nodes of 
these minimal cubature rules are common zeros of certain orthogonal polynomials with respect to 
$\CW_{\a,\b,\pm \f12}$, and, for each fixed $n$, there is a unique Lagrange interpolation polynomial based 
on the nodes of the minimal cubature rule. The case $n=2m+1$ was left open in \cite{X12} because the 
idea of the polynomials that vanish on its nodes has a more complicated structure that requires further
study to understand. 

The purpose of the present paper is to show how the case $n=2m+1$ can be resolved. We obtained two 
families of cubature rules in this case. The first family consists of minimal cubature rules, whose coefficients, 
however, are not explicitly given, whereas the second family consists of cubature rules whose number of 
nodes is 1 more than the theoretical lower bound, but it can be determined explicitly. The nodes of these 
cubature rules are common zeros of certain orthogonal polynomials of degree $n$ and, in the case of the 
second family, one quasi-orthogonal polynomial of degree $n+1$ that does not belong to the idea generated
by those orthogonal polynomials of degree $n$. The second family of cubature rules are explicitly constructed 
because they are related to the product Gauss-Radau cubature rules with respect to the product Jacobi 
weights. For all practical considerations, the second family is better and their study resembles the case 
of $n = 2m$ in \cite{X12}. In addition, we will also give explicit formulas of the Lagrange interpolation polynomials
based on the nodes of the near minimal cubature rules. These formulas allow us to determine the order 
of the Lebesgue constants of the interpolation operators.  

We regard this paper as a complement of \cite{X12} and will refer to the background materials and, in some
cases, even quote formulas there. However, we have tried to make the paper self-contained, so that it can 
be read independently. The paper is organized as follows. In the next section we state background materials,
highlight those not covered in \cite{X12}. The cubature formulas are studied in Section 3 and the Lagrange
interpolation polynomials based on the nodes of the cubature rules are discussed in Section 4. 

\section{Preliminary and Background}
\setcounter{equation}{0}

Besides the section on preliminary and background in \cite{X12}, we need background on near minimal 
cubature rules and orthogonal polynomials of odd degrees with respect to the weight function $\CW_{\a,\b,\pm \f12}$.  

\subsection{Near minimal cubature rules}
Let $W$ be a nonnegative weight function on a domain $\Omega$ in $\RR^2$ that has all finite moments. 
that is, $\int_\Omega x_1^j x_2^k W(x_1,x_2) dx_1dx_2 < \infty$ for all $j,k \in \NN_0$. 
Let 
$$
  \la f, g \ra_W := \int_\Omega f(x_1,x_2)g(x_1,x_2)W(x_1,x_2) dx_1dx_2.
$$
With respect to the weight function $W$, a polynomial $P\in \Pi_n^2$ is called an orthogonal polynomial if 
$\la P, Q\ra_W =0$ for all $Q\in \Pi_{n-1}^2$. Let $\CW_n(W)$ be the space of orthogonal polynomials of two 
variables. Then $\dim \CV_n(W) = n +1$. A basis of $\CV_n(W)$ can be denoted by $\PP_n:=\{P_{k,n}: 0 \le k \le n\}$; 
it is called a mutually orthogonal basis if $\la P_{k,n}, P_{j,n} \ra_W =0$ for $k \ne j$ and an orthonormal bias if, 
in addition, $\la P_{k,n}, P_{k,n}\ra_W =1$. Clearly $\PP_n$ is a set but it is often convenient to regard it also 
as a column vector.

A function $W$ is called centrally symmetric if $x \in \Omega$ implies $-x \in \Omega$ and $W(x) = W(-x)$. 
Evidently, $\CW_{\a,\b,-\pm \f12}$ is centrally symmetric. For a central symmetric integral, the number of nodes, 
$N$, of the cubature rule of degree $2n-1$ in \eqref{eq:CF} satisfies the lower bound \cite{M}
\begin{equation}\label{lwbd}
  N \ge \dim \Pi_{n-1}^2 + \left \lfloor \frac{n}{2} \right \rfloor = \frac{n(n+1)}{2} + \left \lfloor \frac{n}{2} \right \rfloor
     :=N_{\min} , 
\end{equation}
and it is known that the nodes of such a cubature rule are common zeros of a subspace of the space $\CV_n(W)$.
More precisely, the following theorem holds: 

\begin{thm} \label{thm:min}
A minimal cubature rule of degree $2n-1$ that attains the lower bound \eqref{lwbd} exists if 
and only if its nodes are common zeros of $\lfloor \frac{n+1}{2} \rfloor +1$ orthogonal polynomials of degree $n$.
\end{thm}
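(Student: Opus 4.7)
The plan is to establish both directions via dimension counting inside $\Pi_n^2$ combined with the exactness of the cubature rule.

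For the necessity direction (``only if''), assume a cubature rule \eqref{eq:CF} of degree $2n-1$ with $N = N_{\min}$ nodes exists and consider the vanishing ideal at the nodes restricted to $\Pi_n^2$:
$$
   \CI := \{P \in \Pi_n^2 : P(x_k,y_k) = 0,\ k=1,\ldots,N\}.
$$
Since $\CI$ is the kernel of the evaluation map $\Pi_n^2 \to \RR^N$, I get
$$
   \dim \CI \geq \dim \Pi_n^2 - N = (n+1) - \lfloor n/2 \rfloor = \lfloor (n+1)/2 \rfloor + 1.
$$
Two short verifications finish this direction. First, if $Q \in \CI \cap \Pi_{n-1}^2$, then exactness on $\Pi_{2n-2}^2$ yields $\int Q^2 W = \sum_k \l_k Q(x_k,y_k)^2 = 0$, so $Q \equiv 0$. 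Second, for any $P \in \CI$ and $R \in \Pi_{n-1}^2$, exactness on $\Pi_{2n-1}^2$ gives $\la P, R\ra_W = \sum_k \l_k P(x_k,y_k) R(x_k,y_k) = 0$, so $P \in \CV_n(W)$. Thus $\CI$ sits inside $\CV_n(W)$ as a subspace of dimension at least $\lfloor (n+1)/2 \rfloor + 1$, and the nodes are common zeros of any basis.

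For the sufficiency direction (``if''), suppose $r := \lfloor (n+1)/2 \rfloor + 1$ linearly independent $Q_1,\ldots,Q_r \in \CV_n(W)$ share $N_{\min}$ common zeros $(x_k,y_k)$. I would define weights $\l_k := \int_\Omega \ell_k\, W$ for fundamental polynomials $\ell_k$ chosen from $\Pi_{n-1}^2 + \sum_j Q_j \Pi_{n-1}^2$ with $\ell_k(x_j,y_j) = \d_{kj}$. To verify exactness on $\Pi_{2n-1}^2$, I would write a given $f \in \Pi_{2n-1}^2$ modulo the ideal generated by the $Q_j$ as $f = g + \sum_j Q_j h_j$ with $g \in \Pi_{n-1}^2$ and $h_j \in \Pi_{n-1}^2$; the second term both integrates to zero (by orthogonality of each $Q_j$ against $\Pi_{n-1}^2$) and vanishes on the nodes, so exactness reduces to $\Pi_{n-1}^2$, which is handled by Lagrange interpolation on the $\Pi_{n-1}^2$-unisolvent node set.

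The main obstacle is the sufficiency direction: one must show both that the $N_{\min}$ common zeros form a $\Pi_{n-1}^2$-unisolvent set (so that the $\ell_k$ and $\l_k$ are well defined) and that the division algorithm $f \mapsto g + \sum_j Q_j h_j$ surjects $\Pi_{2n-1}^2$ onto $\Pi_{n-1}^2 + \sum_j Q_j \Pi_{n-1}^2$ modulo polynomials vanishing on the nodes. Both are delicate dimension counts that exploit the sharpened Möller bound \eqref{lwbd}, which is itself a consequence of the central symmetry of $W$; without this extra structure only the weaker Stroud bound $N \geq \dim \Pi_{n-1}^2$ is available, and the neat ``$+1$'' in the statement would vanish.
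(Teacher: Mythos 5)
The paper itself does not prove this statement: it is quoted as Möller's characterization, with the proof deferred to \cite{M} (and its extension to \cite{X94}). So your attempt can only be judged on its own terms. Your necessity direction is correct and complete: the dimension count $\dim\CI\ge\dim\Pi_n^2-N_{\min}=\lfloor\frac{n+1}2\rfloor+1$, the injectivity of evaluation on $\Pi_{n-1}^2$ via $\int Q^2W=0$, and the orthogonality of $\CI$ to $\Pi_{n-1}^2$ via exactness on $\Pi_{2n-1}^2$ are exactly the standard argument and need no further input.

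The sufficiency direction, however, contains a concrete error beyond the gaps you flag. Every $Q_j$ vanishes at every node, so the space $\Pi_{n-1}^2+\sum_jQ_j\Pi_{n-1}^2$, restricted to the node set, coincides with the restriction of $\Pi_{n-1}^2$ alone, whose dimension is at most $\dim\Pi_{n-1}^2=N_{\min}-\lfloor n/2\rfloor<N_{\min}$. Hence fundamental polynomials $\ell_k$ with $\ell_k(x_j,y_j)=\delta_{kj}$ simply do not exist in that space, and your weights $\l_k$ are not defined. The correct interpolation space must be enlarged by the $\lfloor n/2\rfloor$-dimensional orthogonal complement of $\mathrm{span}\{Q_j\}$ inside $\CV_n(W)$ --- this is precisely the space $\Pi_n^*$ appearing in Theorem \ref{near-m-cuba+interp} --- and then exactness of the resulting rule on that complement is the genuinely hard step, handled in \cite{M,X94} via the modified reproducing kernel \eqref{Kn*} and relation \eqref{mcfWeight}, not by reduction to $\Pi_{n-1}^2$. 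For the same reason your division claim fails already at degree $n$: an $f\in\CV_n(W)$ outside $\mathrm{span}\{Q_j\}$ cannot be written as $g+\sum_jQ_jh_j$ with $g\in\Pi_{n-1}^2$, since subtracting an element of $\mathrm{span}\{Q_j\}$ cannot lower its degree. So the ``if'' direction of your proposal does not close, and closing it requires essentially the full machinery of Möller's theorem rather than a Lagrange-interpolation shortcut.
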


Let $X$ be the set of the nodes. In the language of idea and variety, the characterization states that $X$ is the
variety of the polynomial ideal generated by $\lfloor \frac{n+1}{2} \rfloor +1$ orthogonal polynomials of degree $n$.
This characterization, established in \cite{M}, was extended in \cite{X94} by considering $(2n-1)$-orthogonal 
polynomials. 

A polynomial $P$ is called a $(2n-1)$-orthogonal polynomial if $\int_\Omega P Q W dxdy =0$ for all polynomials 
$Q$ that satisfy $\deg P + \deg Q \le 2n-1$. Evidently, orthogonal polynomials themselves are $(2n-1)$-orthogonal
polynomials. Moreover, any polynomial of the form $P_{n+1} + P_n + P_{n-1}$, where $P_k \in \CV_k(W)$, is
$(2n-1)$-orthogonal, since they are orthogonal to all polynomials of degree $n-2$ and $(n+1)+(n-2) = 2n-1$. In 
\cite{X94}, we gave a characterization of minimal or near minimal cubature rules of degree $2n-1$ in terms of 
linearly independent $(2n-1)$-orthogonal polynomials that are components of 
\begin{equation} \label{quasiOP}
  \QQ_n:= \PP_{n+1}+ \Gamma_1 \PP_n + \Gamma_2 \PP_{n-1},
\end{equation}
where $\Gamma_1$ and $\Gamma_2$ are matrices of sizes $(n+2)\times(n+1)$ and $(n+2)\times n$, respectively. 
It includes, in particular, the following theorem, need in the next section. 

\begin{thm} \label{thm:min+1}
Let $n \in \NN$. A near minimal cubature rule of degree $2n-1$ whose number of nodes is equal to
\begin{equation}\label{lwbd+1}
  N = \dim \Pi_{n-1}^2 + \left \lfloor \frac{n}{2} \right \rfloor + 1 = N_{\min}(n) +1
\end{equation}
exists if its nodes are common zeros of $\lfloor \frac{n+1}2 \rfloor$ orthogonal polynomials of 
degree $n$ and $\QQ_n$  in \eqref{quasiOP} for some $\Gamma_1$ and $\Gamma_2$.
\end{thm}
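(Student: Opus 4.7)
The plan is to invoke the characterization of near minimal cubature rules from \cite{X94}, in which existence is tied to the common zero sets of $(2n-1)$-orthogonal polynomials. The first step is to verify that every polynomial in the hypothesis is $(2n-1)$-orthogonal: the $\lfloor (n+1)/2 \rfloor$ members of $\CV_n(W)$ satisfy this trivially, while each component of $\QQ_n = \PP_{n+1} + \Gamma_1 \PP_n + \Gamma_2 \PP_{n-1}$ has the form $P_{n+1}+P_n+P_{n-1}$ with $P_j \in \CV_j(W)$, hence pairs to zero against any $Q$ with $\deg Q \le n-2$, as already noted in the paragraph preceding the theorem.

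Next, let $X = \{(x_k,y_k): 1 \le k \le N\}$ be the assumed common zero set and let $\CI$ be the ideal it generates. The heart of the argument is a decomposition: every $f \in \Pi_{2n-1}^2$ is written as $f = r + \sum_j h_j R_j$, where the $R_j$ are the specified generators, $\deg h_j + \deg R_j \le 2n-1$, and $r$ lies in a fixed $N$-dimensional subspace $V \subset \Pi_{n+1}^2$ on which the $N$ points of $X$ are unisolvent for polynomial interpolation. Granted this decomposition, $(2n-1)$-orthogonality gives $\int h_j R_j W = 0$ term by term, while $\sum_j h_j R_j$ vanishes on $X$, so $f|_X = r|_X$. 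Defining $\lambda_k$ by integrating the Lagrange interpolant of $r$ at $X$ then yields \eqref{eq:CF} for all $f \in \Pi_{2n-1}^2$.

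The dimensional count behind the construction is
$$ \dim \Pi_n^2 - N = \frac{(n+1)(n+2)}{2} - \frac{n(n+1)}{2} - \left\lfloor \frac{n}{2} \right\rfloor - 1 = \left\lfloor \frac{n+1}{2} \right\rfloor, $$
matching exactly the number of degree-$n$ orthogonal polynomials supplied; the $n+2$ components of $\QQ_n$ then contribute the degree-$(n+1)$ conditions needed to cut the ambient variety down to precisely $N$ points. The main obstacle is the algebraic division step, namely producing the decomposition $f = r + \sum h_j R_j$ while respecting the degree budget $\deg h_j + \deg R_j \le 2n-1$, together with checking unisolvence of $X$ on $V$. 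Both are settled in \cite{X94} by passing to the dual moment functional and studying the bilinear pairing $\la \cdot,\cdot\ra_W$ restricted to $\CI \cap \Pi_{2n-1}^2$; once these technical points are granted, the near minimal cubature rule of degree $2n-1$ follows immediately.
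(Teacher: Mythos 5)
The paper gives no proof of this statement: it is imported verbatim from \cite{X94}, and the only argument supplied in the text is the surrounding remark explaining why the components of $\QQ_n$ are $(2n-1)$-orthogonal. Your sketch reconstructs the standard M\"oller--Xu argument correctly: the $(2n-1)$-orthogonality check, the decomposition $f = r + \sum_j h_j R_j$ with the degree budget $\deg h_j + \deg R_j \le 2n-1$, the conclusion $\int f W = \int r W$ together with $f|_X = r|_X$, and the count $\dim\Pi_n^2 - N = \lfloor\frac{n+1}{2}\rfloor$ are all right, and you defer exactly the two technical steps (the controlled division and the unisolvence of $X$ on the complement space) that the paper itself defers to \cite{X94}, so there is nothing to fault relative to the paper's own treatment. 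The one refinement worth recording is that the remainder space can be taken inside $\Pi_n^2$ rather than $\Pi_{n+1}^2$: as the paper's Theorem~\ref{near-m-cuba+interp} makes explicit, it is $\Pi_n^* = \Pi_{n-1}^2 \oplus \mathrm{span}\{Q_{k,n}: 0 \le k \le \lfloor n/2\rfloor\}$, where the $Q_{k,n}$ span the orthogonal complement in $\CV_n(W)$ of the $\lfloor\frac{n+1}{2}\rfloor$ degree-$n$ generators, and it is on this specific $N$-dimensional space that unisolvence of $X$ must be (and in \cite{X94} is) verified.
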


The number of nodes of the cubature rule in \eqref{lwbd+1} is one more than the lower bound given in 
\eqref{lwbd}. Since $x_i P$, $P \in \CV_n(W)$ and $i=1$ or $2$, is $(2n-1)$-orthogonal by the three-term 
relation of orthogonal polynomials in two variables \cite{DX} and, furthermore, it is of degree $n+1$ and of the 
form $c^T \QQ_n$ for some nonzero vector $c \in \RR^{n+2}$, we see that $x_i$ multiples of those 
$\lfloor \frac{n+1}2 \rfloor$ orthogonal polynomials of degree $n$ can be elements of $\QQ_n$. In fact, a quick 
count shows that,  if $n$ is odd, there is only one polynomial in $\QQ_n$ that does not arise this way. It is 
worth to point out that if a cubature rule attains the lower bound \eqref{lwbd}, then its nodes are common zeros
of $\lfloor \frac{n+1}2 \rfloor+1$ orthogonal polynomials of degree $n$, whose $x_i$ multiples lead to a 
$\QQ_n$, as in \eqref{quasiOP}, that vanishes on all nodes. 

The cubature rule with the number of nodes \eqref{lwbd+1} can be obtained by integrating the Lagrange 
interpolation polynomial based on its nodes. The polynomial that interpolates at the nodes of the cubature rule 
arises from the subspace of polynomials $\Pi_n^2 \setminus \CI_n$, where $\CI_n: = \{P_{k,n}: k =1,2,\ldots, 
\lfloor \frac{n+1}{2} \rfloor\}$ and $P_{k,n}$ are the orthogonal polynomials of degree $n$ that vanish on the 
nodes of the cubature rule. Let $K_n(W;\cdot,\cdot)$ be the reproducing kernel of the space
$\CV_n(W)$ and let $\{Q_{k,n}: 0 \le k \le \lfloor \frac{n}{2} \rfloor\}$ be an orthonormal basis of the 
orthogonal complement of $\CI_n$ in $\CV_n(W)$. Then $Q_{k,n} \in \CV_n(W)$ and none of $Q_{k,n}$ vanishes 
on all nodes of the cubature rule. The following theorem is proved in \cite{X94}: 

\begin{thm} \label{near-m-cuba+interp}
Let $W$ be a central symmetric weight function. Let $\{(x_k, y_k): 1 \le k \le N\}$ and $\l_k$ be the nodes and 
weights of the cubature rule of degree $2n-1$ in Theorem \ref{thm:min+1}, respectively. Then
\begin{enumerate}[\rm (a)]
\item There exists a sequence of positive numbers $\{b_{k,n}: 0 \le k \le \left \lfloor \frac{n}{2} \right \rfloor\}$, 
uniquely determined, such that the kernel $K_n^*(\cdot, \cdot)$, defined by 
\begin{equation} \label{Kn*}
 K_n^*(W; x,y) := K_{n-1}(W; x,y) + \sum_{k=0}^{\lfloor \frac{n}{2} \rfloor} b_{k,n} Q_{k,n} (x) Q_{k,n}(y)
\end{equation}
for $x = (x_1,x_2)$ and $y = (y_1,y_2)$, satisfies 
\begin{equation}\label{mcfWeight}
   \l_{k,n} =  \left[ K_n^* (W; (x_k,y_k), (x_k,y_k)) \right ]^{-1}, \quad 1 \le k \le N.
\end{equation}
\item The polynomial $\CL_n f$, defined by
$$
   \CL_n f(x,y) := \sum_{k=1}^N f(x_k,y_k) \ell_k(x,y), \quad \ell_k(x,y):=\lambda_{k,n} K_n^*(W; (x,y), (x_k,y_k)),
$$
satisfies $\CL_n f (x_{k},y_{k}) =  f(x_{k},y_{k})$ for $1 \le k \le N$ and it is the unique interpolation polynomial 
in the space $\Pi_n^*: = \Pi_{n-1}^2 \cup\{Q_{k,n}: 0 \le k \le \lfloor \frac{n}{2} \rfloor\}$.
\end{enumerate}
\end{thm}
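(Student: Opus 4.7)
The plan is to identify $K_n^*$ as the reproducing kernel of $\Pi_n^*$ with respect to the discrete inner product $\la f, g\ra_N := \sum_{k=1}^N \l_k f(x_k,y_k) g(x_k,y_k)$ induced by the cubature rule, after which both (a) and (b) will follow from the standard reproducing-kernel argument. First I would verify the dimension count $\dim \Pi_n^* = \tfrac{n(n+1)}{2} + \lfloor n/2\rfloor + 1 = N$, so that $\Pi_n^*$ has the right size to interpolate at the $N$ nodes, and then show that evaluation at the nodes is injective on $\Pi_n^*$ (equivalently, $\la\cdot,\cdot\ra_N$ is positive definite there). If $P = P_{n-1} + R \in \Pi_n^*$ with $P_{n-1}\in \Pi_{n-1}^2$ and $R \in \mathrm{span}\{Q_{k,n}\}$ vanishes at every node, then testing the cubature rule with $Pg$ for $g \in \Pi_{n-1}^2$ (the product having degree at most $2n-1$) gives $\la P,g\ra_W = 0$; orthogonality $R\perp \Pi_{n-1}^2$ forces $P_{n-1}=0$, so $R\in \CV_n(W)$ lies in the complement of $\CI_n$ and vanishes on all $N$ nodes. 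A dimension count on the evaluation map $\Pi_n^2 \to \RR^N$, using $\dim \Pi_n^2 - N = \lceil n/2\rceil = \dim \CI_n$ together with the hypothesis of \thmref{thm:min+1}, then identifies $\CI_n$ as the entire kernel of this evaluation and forces $R=0$.

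Next I would produce the $b_{k,n}$ by simultaneously diagonalizing the two positive definite bilinear forms $\la\cdot,\cdot\ra_W$ and $\la\cdot,\cdot\ra_N$ on the $(\lfloor n/2\rfloor+1)$-dimensional complement of $\CI_n$ in $\CV_n(W)$; this selects the orthonormal basis $\{Q_{k,n}\}$ in such a way that $\la Q_{k,n}, Q_{j,n}\ra_N = \d_{kj}/b_{k,n}$ with $b_{k,n}>0$. I would then verify directly that the corresponding $K_n^*$ in \eqref{Kn*} is the reproducing kernel of $\Pi_n^*$ under $\la\cdot,\cdot\ra_N$. For $P\in\Pi_{n-1}^2$ the integrand $K_n^*((u,v),\cdot)P(\cdot)$ has total degree at most $2n-1$, so $\la\cdot,\cdot\ra_N$ coincides with $\la\cdot,\cdot\ra_W$, and the reproducing property collapses to that of $K_{n-1}$ together with $Q_{k,n}\perp\Pi_{n-1}^2$. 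For $P = Q_{j,n}$ the $K_{n-1}$ piece contributes zero by the same degree argument combined with $Q_{j,n}\perp \Pi_{n-1}^2$, while the diagonalization ensures $\sum_k b_{k,n} Q_{k,n}(u,v)\la Q_{k,n}, Q_{j,n}\ra_N = Q_{j,n}(u,v)$. Uniqueness of the $b_{k,n}$ is automatic from uniqueness of reproducing kernels in finite-dimensional inner product spaces.

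With $K_n^*$ identified as the reproducing kernel, both claims fall out at once. By injectivity of evaluation there exist unique Lagrange fundamentals $\widetilde\ell_k\in\Pi_n^*$ with $\widetilde\ell_k(x_j,y_j)=\d_{kj}$, and the reproducing property applied to each $\widetilde\ell_k$ yields $\widetilde\ell_k(u,v) = \l_k K_n^*((u,v),(x_k,y_k)) = \ell_k(u,v)$. This shows the $\ell_k$ of the theorem satisfy $\ell_k(x_j,y_j)=\d_{kj}$, so $\CL_n f(x_j,y_j) = f(x_j,y_j)$, with uniqueness in $\Pi_n^*$ again a consequence of injectivity; this proves (b). Specializing $(u,v)=(x_k,y_k)$ in the identity $\ell_k(x_k,y_k)=1$ produces $\l_k K_n^*((x_k,y_k),(x_k,y_k))=1$, which is \eqref{mcfWeight} and establishes (a). The main obstacle is the dimension/kernel argument in the first paragraph: verifying that $\CI_n$ exhausts the degree-$n$ kernel of evaluation at the nodes genuinely requires the structural hypothesis of \thmref{thm:min+1} on the ideal generated by the cubature nodes, and is essentially the content of the characterization from \cite{X94}.
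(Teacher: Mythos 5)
The paper does not actually prove this theorem---it is quoted verbatim from \cite{X94}---so there is no internal proof to measure your argument against. Your reconstruction via the discrete inner product $\la f,g\ra_N:=\sum_{k}\l_k f(x_k,y_k)g(x_k,y_k)$ and the reproducing kernel of $\Pi_n^*$ is the standard route and is essentially how such results are established in \cite{X94}: the count $\dim\Pi_n^*=N$, the orthogonality $\Pi_{n-1}^2\perp_N \mathrm{span}\{Q_{k,n}\}$ and the agreement of $\la\cdot,\cdot\ra_N$ with $\la\cdot,\cdot\ra_W$ on $\Pi_{n-1}^2$ (both from exactness of degree $2n-1$), and the derivation of (a) and (b) from the reproducing property plus poisedness are all correct.

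The one genuine gap is the step you flag yourself, and it is worth saying precisely why it is a gap rather than a dimension count. After reducing a node-vanishing $P\in\Pi_n^*$ to an element $R$ of the complement of $\CI_n$ in $\CV_n(W)$ vanishing on all nodes, you conclude $R=0$ from $\dim\ker(\mathrm{ev})=\dim\Pi_n^2-N=\lceil n/2\rceil=\dim\mathrm{span}\,\CI_n$. But that equality presupposes that the evaluation map $\Pi_n^2\to\RR^N$ is surjective, and surjectivity is equivalent to the poisedness you are trying to establish---so the count is circular as written. What is actually required is the ideal-theoretic input of \cite{X94}/M\"oller: for a positive cubature rule whose node set $X$ is the variety described in Theorem \ref{thm:min+1}, the degree-$n$ part of the vanishing ideal $\CI(X)$ is exactly $\mathrm{span}\,\CI_n$; that is a theorem, not bookkeeping. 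A second, smaller point: the paper fixes the orthonormal basis $\{Q_{k,n}\}$ before the statement, whereas you construct it by simultaneously diagonalizing $\la\cdot,\cdot\ra_W$ and $\la\cdot,\cdot\ra_N$. For an arbitrary orthonormal basis the discrete Gram matrix $\la Q_{j,n},Q_{k,n}\ra_N$ need not be diagonal, and then the kernel cannot take the separated form \eqref{Kn*}. Your choice of basis is the right reading of the theorem (it is the one the author implicitly relies on in Section 4, where the diagonality of $\C_n[Q_{q,2m+1}Q_{\ell,2m+1}]$ is verified by hand for a concrete basis), but as stated your argument proves the theorem only for a suitably chosen basis, and you should say so.
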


Similar statement holds for minimal cubature rules that attain the lower bound \eqref{lwbd}, which is used in 
\cite{X12}. 

\subsection{Orthogonal polynomials with respect to $\CW_{\a,\b,\pm \f12}$}
The orthogonal polynomials with respect to $\CW_{\a,\b,\pm \f12}$ are related to orthogonal polynomials
with respect to the weight function 
\begin{equation} \label{Wabc}
W_{\a,\b,\pm \f12}(u,v) : = b_{\a,\b,\pm \f12}(1-u+v)^\a (1+u+v)^\b (u^2- 4 v)^{\pm \f12},
\end{equation}
where $\a, \b > -1$ and $b_{\a,\b,\pm \f12}$ is the normalization constant so that the integral of $W_{\a,\b,\pm \f12}$
over the domain 
\begin{equation} \label{Omega}
       \Omega: = \{(u,v): 1+u+v > 0, 1-u+v > 0, u^2 > 4v\}, 
\end{equation}
bounded by a parabola and two lines, is 1. Let $p_n^{(\a,\b)}$ be the orthonormal Jacobi polynomials with 
respect to the normalized Jacobi weight function. Then 
an orthonormal basis for $\CV_n(W_{\a,\b,-\frac12})$ is given by 
\begin{equation} \label{OP-1/2}
    P_{k,n}^{\a,\b,-\frac12} (u,v) = \begin{cases} 
              p_n^{(\a,\b)}(x) p_k^{(\a,\b)}(y) + p_n^{(\a,\b)}(y) p_k^{(\a,\b)}(x), & 0 \le k < n, \\ 
               \sqrt{2}  p_n^{(\a,\b)}(x)  p_n^{(\a,\b)}(y), & k =n,\end{cases} 
\end{equation}
and an orthonormal basis for $\CV_n(W_{\a,\b,\frac12})$ is given by 
\begin{equation} \label{OP+1/2}
    P_{k,n}^{\a,\b,\frac12} (u,v) = \frac{ p_{n+1}^{(\a,\b)} (x)  p_k^{(\a,\b)}(y) - p_{n+1}^{(\a,\b)} (y)  p_k^{(\a,\b)}(x)}{x-y}, 
       \quad 0 \le k \le n, 
\end{equation}
where $(u,v)$ and $(x,y)$ are related by $u: = x+y$, $v:= xy$ (\cite{K74}). 
 
The weight function $\CW_{\a,\b,\pm \f12}$ is related to $W_{\a,\b,\pm \f12}$, which we redefine as
\begin{align} \label{CWgamma}
        \CW_{\a,\b,\pm \f12} (x,y) := &\, W_{\a,\b,\pm\f12} (2 x y, x^2+y^2 -1)|x^2-y^2|\\
            = &\, b_{\a,\b,\pm\f12} 2^{\pm 1} |x-y|^{2\a +1}  |x + y|^{2\b +1}   (1-x^2)^{\pm \f12} (1-y^2)^{\pm \f12} \notag
\end{align}
for $(x,y) \in [-1,1]^2$, where we have included the normalization constant, which can be verified by the integral 
relation
\begin{align} \label{Int-P-Q}
 \int_{\Omega} f(u,v) W_{\a,\b,\pm\f12} (u,v) du dv  = \int_{[-1,1]^2} f(2xy, x^2+y^2 -1) \CW_{\a,\b,\pm\f12} (x,y) dx dy.
\end{align}
The orthogonal polynomials with respect to $\CW_{\a,\b,\pm \f12}$ can be expressed in terms of orthogonal polynomials 
with respect to $W_{\a,\b,\pm \f12}$ (\cite{X12a}).  

\begin{prop} \label{prop:OP}
An orthonormal basis for $\CV_{2n}(W_{\a,\b,\pm \f12})$ is given by 
\begin{align*} 
   {}_1Q_{k,2n}^{\a,\b,\pm \f12}(x,y):= & P_{k,n}^{\a,\b,\pm \f12}(2xy, x^2+y^2 -1), \quad 0 \le k \le n, \\
   {}_2Q_{k,2n}^{\a,\b,\pm \f12}(x,y) := & a_{\a,\b,\pm \f12}^{(1,1)}(x^2-y^2)  P_{k,n-1}^{\a+1,\b+1,\pm \f12}(2xy, x^2+y^2 -1),  
   \quad 0 \le k \le n-1, 
\end{align*}
and an orthonormal basis for for $\CV_{2n+1}(W_{\a,\b,\pm \f12})$ is given by 
\begin{align*} 
   {}_1Q_{k,2n+1}^{\a,\b,\pm \f12}(x,y):= & a_{\a,\b,\pm \f12}^{(0,1)}(x+y)P_{k,n}^{\a,\b+1,\pm \f12}(2xy, x^2+y^2 -1), 
     \quad 0 \le k \le n, \\
   {}_2Q_{k,2n+1}^{\a,\b,\pm \f12}(x,y) := & a_{\a,\b,\pm \f12}^{(1,0)}(x-y)P_{k,n-1}^{\a+1,\b,\pm \f12}(2xy, x^2+y^2 -1),  
   \quad 0 \le k \le n, 
\end{align*}
where $a_{\a,\b,\pm \f12}^{(i,j)}= \sqrt{b_{\a+i,\b+j,\pm \f12}/b_{\a,\b,\pm \f12}}$. 
\end{prop}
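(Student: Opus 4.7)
The plan is to exploit two ingredients: the integration identity \eqref{Int-P-Q}, which converts $[-1,1]^2$-integrals against $\CW_{\a',\b',\pm\f12}$ into $\Omega$-integrals against $W_{\a',\b',\pm\f12}$ via $u=2xy$, $v=x^2+y^2-1$, and the $\ZZ_2\times\ZZ_2$ symmetry of $\CW_{\a,\b,\pm\f12}$ under the two reflections $\sigma:(x,y)\mapsto(y,x)$ and $\tau:(x,y)\mapsto(-x,-y)$.

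First I would decompose $\RR[x,y]$ into its four $\ZZ_2\times\ZZ_2$ isotypic components. Each component has the form $g(x,y)\cdot\RR[u,v]$ for a unique generator $g$ among $\{1,\,x+y,\,x-y,\,x^2-y^2\}$; here $\RR[u,v]$, the invariant ring, is precisely the pull-back of polynomials in $u,v$ to $x,y$. Because $\CW_{\a,\b,\pm\f12}$ itself is symmetric under both reflections, integrals of products drawn from two distinct components vanish. The proposed basis polynomials each lie in a single component: ${}_1Q_{k,2n}$ is of trivial type, ${}_2Q_{k,2n}$ of type $(x^2-y^2)$, ${}_1Q_{k,2n+1}$ of type $(x+y)$, and ${}_2Q_{k,2n+1}$ of type $(x-y)$. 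Cross-type orthogonality of the claimed basis is therefore automatic.

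Second, I would verify orthonormality within each type by the following reduction. From \eqref{CWgamma}, I read off the weight-shift identities
$$
(x+y)^2\CW_{\a,\b,\pm\f12}=\tfrac{b_{\a,\b,\pm\f12}}{b_{\a,\b+1,\pm\f12}}\CW_{\a,\b+1,\pm\f12},\quad
(x-y)^2\CW_{\a,\b,\pm\f12}=\tfrac{b_{\a,\b,\pm\f12}}{b_{\a+1,\b,\pm\f12}}\CW_{\a+1,\b,\pm\f12},
$$
together with $(x^2-y^2)^2\CW_{\a,\b,\pm\f12}=\tfrac{b_{\a,\b,\pm\f12}}{b_{\a+1,\b+1,\pm\f12}}\CW_{\a+1,\b+1,\pm\f12}$. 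When the component generator is squared inside $\la{}_iQ_{k,m},{}_iQ_{j,m}\ra_{\CW_{\a,\b,\pm\f12}}$, the scalar $(a_{\a,\b,\pm\f12}^{(i,j)})^2$ exactly cancels the ratio of normalization constants, so the inner product becomes $\int P_{k,\cdot}^{\a',\b',\pm\f12}P_{j,\cdot}^{\a',\b',\pm\f12}\CW_{\a',\b',\pm\f12}\,dxdy$ at the shifted parameters. Applying \eqref{Int-P-Q} at those parameters and invoking the Koornwinder orthonormality \eqref{OP-1/2}--\eqref{OP+1/2} delivers $\delta_{kj}$.

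Third, to obtain orthogonality to $\Pi_{m-1}^2$ (with $m=2n$ or $m=2n+1$), I would project any $R\in\Pi_{m-1}^2$ onto the isotypic type of the basis polynomial under consideration; only this projection survives the integration. Writing the projection as $g(x,y)\wt R(u,v)$, a direct count (each of $u,v$ has $(x,y)$-degree $2$, and $g$ has degree $0,1,$ or $2$) forces $\deg_{(u,v)}\wt R\le n-1$ in three of the four cases and $\le n-2$ in the $(x^2-y^2)$-case. The same weight-shift reduction then turns the remaining integral into $\int P_{k,\cdot}^{\a',\b',\pm\f12}\wt R\,W_{\a',\b',\pm\f12}\,dudv$, which vanishes by the univariate-in-$(u,v)$ orthogonality of the Koornwinder basis against $\Pi_{n-1}^2$ (respectively $\Pi_{n-2}^2$). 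The dimension counts $(n+1)+n=2n+1$ and $(n+1)+(n+1)=2n+2$ match $\dim\CV_{2n}(\CW_{\a,\b,\pm\f12})$ and $\dim\CV_{2n+1}(\CW_{\a,\b,\pm\f12})$, so the basis is complete. No single step is deep; the main care lies in the bookkeeping of the isotypic projection and of the exact degree of $\wt R$ so that the Koornwinder orthogonality relation applies at precisely the right threshold.
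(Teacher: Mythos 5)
The paper does not actually prove Proposition~\ref{prop:OP}: it is quoted from \cite{X12a}, so there is no internal proof to compare against. Judged on its own, your argument is correct and self-contained, and it is in substance the argument one finds in \cite{X12a}. The $\ZZ_2\times\ZZ_2$ isotypic decomposition with component generators $1$, $x+y$, $x-y$, $x^2-y^2$ over the invariant ring $\RR[2xy,\,x^2+y^2-1]$ is exactly right (a symmetric polynomial is a polynomial in $e_1=x+y$, $e_2=xy$, and parity in $e_1$ sorts the four types); the weight-shift identities are immediate from \eqref{CWgamma}; the factor $\bigl(a^{(i,j)}_{\a,\b,\pm\f12}\bigr)^2$ does cancel the ratio $b_{\a,\b,\pm\f12}/b_{\a+i,\b+j,\pm\f12}$ produced by the shift; and the degree bookkeeping ($\deg_{(u,v)}\wt R\le n-1$, resp.\ $\le n-2$ for the $(x^2-y^2)$ type, using that $u^iv^j$ pulls back to degree exactly $2(i+j)$) places the reduced integrals precisely within the range of the Koornwinder orthogonality \eqref{OP-1/2}--\eqref{OP+1/2} via \eqref{Int-P-Q}. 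One point worth flagging: as printed, ${}_2Q_{k,2n+1}^{\a,\b,\pm\f12}$ involves $P_{k,n-1}^{\a+1,\b,\pm\f12}$ for $0\le k\le n$, which would have degree $2n-1$ and cannot lie in $\CV_{2n+1}$; the index must be $P_{k,n}^{\a+1,\b,\pm\f12}$ (consistent with the dimension count $(n+1)+(n+1)=2n+2$ and with the use of the zeros of $P_m^{(\a+1,\b)}$ in the proof of Theorem~\ref{thm:minCuba}). Your degree count for the $(x-y)$-type tacitly assumes this corrected form, which is the right reading of the statement (likewise $\CV_{2n}(W_{\a,\b,\pm\f12})$ in the proposition should be $\CV_{2n}(\CW_{\a,\b,\pm\f12})$), but the discrepancy deserves an explicit remark.
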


In particular, using the explicit expression of $b_{\a,\b,-\f12}$ in \cite[(2.8) and (2.13)]{X12}, we have 
\begin{align} \label{eq:constant}
\begin{split}
  a_{\a,\b,-\f12}^{(0,1)} & = \frac{\a+\b+2}{2(\b+1)}, \qquad  a_{\a,\b,-\f12}^{(1,0)} = \frac{\a+\b+2}{2(\a+1)}, \\
  a_{\a,\b,-\f12}^{(1,1)} &= \frac{(\a+\b+2)(\a+\b+3)}{4(\a+1)(\b+1)}. 
\end{split}
\end{align}

Using \eqref{OP-1/2} and \eqref{OP+1/2}, it is easy to see that, if $x = \cos \t$, $y = \cos \phi$, then
\begin{align*} 
 & P_{k,n}^{\a,\b, -\frac12} (2 x y, x^2+y^2 -1)  \\ 
 &  \, =   p_n^{(\a,\b)}(\cos (\t - \phi)) p_k^{(\a,\b)}(\cos (\t+\phi)) 
           +  p_k^{(\a,\b)} (\cos (\t - \phi)) p_n^{(\a,\b)}(\cos (\t+\phi)), 
\end{align*}     
where the right hand needs to be divided by $\sqrt{2}$ if $k=n$, and 
\begin{align*} 
 & P_{k,n}^{\a,\b, \frac12} (2 x y, x^2+y^2 -1)  \\ 
 &  \, =  \g_n \frac{p_{n+1}^{(\a,\b)}(\cos (\t - \phi)) p_k^{(\a,\b)}(\cos (\t+\phi)) 
           -  p_k^{(\a,\b)} (\cos (\t - \phi)) p_{n+1}^{(\a,\b)}(\cos (\t+\phi))}{2 \sin \t \sin \phi},
\end{align*}
where $\g_n^2 = b_{\a,\b, - \f12}/(2 b_{\a,\b,\f12})$. These relations will be useed extensively below. 

\section{Minimal and near minimal cubature rules}
\setcounter{equation}{0}

It is convenient to adopt the language of idea and variety in the discussion below. Let $X$ be a set of points. Let
$\CI$ be the polynomial ideal defined by $\CI = \CI(X)= \{p \in \RR[x]: p(x) = 0, \forall x \in X\}$ that has $X$ as its variety.  
A cubature rule of degree $2n-1$ exists if $\CI(X)$ is generated by $(2n-1)$-orthogonal polynomials. Furthermore, 
the existence of a cubature rule can be characterized by the existence of a polynomial idea, generated by 
$(2n-1)$-orthogonal polynomials, such that its variety $X$ consists of finite many real points and 
$|X| = \mathrm{codim}\, \CI $. 

For the weight function $\CW_{\a,\b,\pm \f12}$ a cubature rule that attains the lower bound \eqref{lwbd} is necessarily
a minimal cubature rule and the set of its nodes is the variety of the ideal generated by $\lfloor \f{n+1}2 \rfloor+1$
orthogonal polynomials of degree $n$. For $n= 2m$, it is shown in \cite{X12} that the minimal cubature rule of degree 
$2n-1=4m-1$ exists and its set of nodes is the variety of the ideal $\la {}_1Q_{k,2m}^{\a,\b,\pm \f12}: 0 \le k \le m\ra$. 
For $n=2m+1$, the variety of the idea $\la {}_1Q_{k,2m+1}^{\a,\b,\pm \f12}: 0 \le k \le m\ra$ is infinite, since
it contains the set $\{(x,y): x=y\}$, so is the idea $\la {}_2Q_{k,2m+1}^{\a,\b,\pm \f12}: 0 \le k \le m\ra$. In this case,
more work is needed. 

\begin{thm} \label{thm:minCuba}
For $n = 2m+1$, there exist cubature rules of degree $2n-1 = 4m+1$ for $\CW_{\a,\b, \pm \f12}$ whose number
of nodes is equal to the lower bound $N_{\min}(n)$.
\end{thm}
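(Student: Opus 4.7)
The plan is to apply Theorem~\ref{thm:min}: the existence of a minimal cubature rule of degree $2n-1 = 4m+1$ is equivalent to exhibiting $\lfloor (n+1)/2 \rfloor + 1 = m+2$ linearly independent orthogonal polynomials in $\CV_{2m+1}(\CW_{\a,\b,\pm\f12})$ whose common zero set consists of exactly $N_{\min}(2m+1) = 2m^2 + 4m + 1$ real points in $[-1,1]^2$. As noted in the paragraph just preceding the statement, the two obvious candidate families from Proposition~\ref{prop:OP}, namely $\{{}_1Q_{k,2m+1}: 0 \le k \le m\}$ and $\{{}_2Q_{k,2m+1}: 0 \le k \le m\}$, each generate an ideal with infinite variety (containing the line $x+y=0$ and $x-y = 0$ respectively), because of the common factors $(x+y)$ and $(x-y)$. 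The sought-after $(m+2)$-dimensional ideal must therefore mix the two families in a way that eliminates both entire lines from the common zero set.

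My approach is to pull everything back to the parabolic domain $\Omega$ via the substitution $u = 2xy$, $v = x^2+y^2-1$, and to exploit the identity \eqref{Int-P-Q} together with the explicit description of $\CV_m(W_{\a,\b+1,\pm\f12})$ and $\CV_m(W_{\a+1,\b,\pm\f12})$ through \eqref{OP-1/2}--\eqref{OP+1/2}. Under this substitution, a general element of $\CV_{2m+1}(\CW_{\a,\b,\pm\f12})$ has the form $(x+y)A(u,v) + (x-y)B(u,v)$, and its zeros in the interior of $[-1,1]^2$ correspond to pairs $(u,v) \in \Omega$ at which a certain rank-one condition holds for the pair $(A,B)$. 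Using that Gaussian cubature rules of degree $2m-1$ exist on $\Omega$ for each of the shifted weights $W_{\a,\b+1,\pm\f12}$ and $W_{\a+1,\b,\pm\f12}$, with $\binom{m+1}{2}$ common zeros of a full basis of the corresponding $\CV_m$, I would try to assemble the $m+2$ required polynomials as suitable $\RR$-linear combinations of ${}_1Q$'s and ${}_2Q$'s so that their common variety is the union of (i) a finite set of interior points obtained by lifting these parabolic Gaussian nodes through the (generically) 4-to-1 map, and (ii) a finite collection of points on the lines $x \pm y = 0$ determined by one-variable Gauss-type quadratures on the restricted weights. Passing $x=\cos\theta$, $y = \cos\phi$ into the identities at the end of Section~2 converts these conditions into statements about zeros of Jacobi polynomials at $\cos(\theta\pm\phi)$, which is the natural setting for the counting.

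The main obstacle is precisely the counting and reality statement: one must choose the coefficients in the mixture of ${}_1Q$'s and ${}_2Q$'s so that the resulting ideal has finite variety of cardinality exactly $N_{\min}(2m+1)$ and so that every common zero lies in $[-1,1]^2$ (which is needed for the cubature weights produced from Theorem~\ref{thm:min} to be positive). In contrast with the $n=2m$ case of \cite{X12}, where the minimal ideal is generated by a single natural family ${}_1Q_{k,2m}$, here the mixture is forced and the equations governing the admissible combinations do not appear to be solvable in closed form; this matches the announcement in the introduction that the coefficients of the minimal rule are not explicit. I therefore expect the proof to be an existence argument, using a degree/dimension count and the general characterization of Theorem~\ref{thm:min}, rather than a closed-form construction of the nodes.
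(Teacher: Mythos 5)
Your setup is right---Theorem \ref{thm:min} reduces the problem to finding $m+2$ orthogonal polynomials of degree $2m+1$ with exactly $N_{\min}(2m+1)$ real common zeros, and you correctly observe that neither family ${}_1Q_{k,2m+1}^{\a,\b,\pm\f12}$ nor ${}_2Q_{k,2m+1}^{\a,\b,\pm\f12}$ alone can work because of the common factors $x+y$ and $x-y$. But your proposal stops exactly where the real work begins, and it turns in the wrong direction at that point: you anticipate a non-constructive existence argument via a dimension/Bezout count for a genuinely ``mixed'' ideal, whereas the paper's proof is completely explicit about the nodes. (The introduction's remark that the coefficients are not explicit refers to the cubature \emph{weights} $\l_k$, not to the nodes.) The actual construction for $\CW_{\a,\b,-\f12}$ keeps the \emph{entire} family ${}_2Q_{k,2m+1}^{\a,\b,-\f12}$, $0\le k\le m$, whose common variety is the explicit finite set $X_m$ of $2m(m+1)$ points built from the zeros of $P_m^{(\a+1,\b)}$ together with the whole line $x=y$, and then adjoins a \emph{single} polynomial $q_m$ from the span of the ${}_1Q_{k,2m+1}^{\a,\b,-\f12}$, chosen by expanding $p_m^{(\a+1,\b)}$ in the basis $\{p_k^{(\a,\b+1)}\}$ so that $q_m=(x+y)\bigl[p_m^{(\a,\b+1)}(s)\,p_m^{(\a+1,\b)}(t)+p_m^{(\a,\b+1)}(t)\,p_m^{(\a+1,\b)}(s)\bigr]$ with $s=\cos(\t-\phi)$, $t=\cos(\t+\phi)$. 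The point of this choice is that $q_m$ already vanishes on $X_m$ through the factor $p_m^{(\a+1,\b)}$, so its only job is to cut the line $x=y$ down to finitely many points.

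That last step is the part your plan has no substitute for, and it rests on a concrete fact about Jacobi polynomials (Lemma \ref{lem:Jacobi}): $P_m^{(\a,\b+1)}$ and $P_m^{(\a+1,\b)}$ have no common zero, and the symmetrized polynomial $P_m^{(\a,\b+1)}(1)P_m^{(\a+1,\b)}(2x^2-1)+P_m^{(\a+1,\b)}(1)P_m^{(\a,\b+1)}(2x^2-1)$---which is, up to the factor $2x$, the diagonal restriction $q_m(x,x)$---is a quasi-orthogonal polynomial with $2m$ simple zeros in $(-1,1)$, proved from the contiguous relations \eqref{eq:Jacobi1}--\eqref{eq:Jacobi2} and sign conditions at $\pm1$. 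This yields exactly $2m+1$ diagonal nodes, and $2m(m+1)+2m+1=2(m+1)^2-1=N_{\min}(2m+1)$; reality of all nodes is automatic because everything is parametrized by cosines. Your proposed rank-one/Bezout counting over $\Omega$ can only give upper bounds on the number of common zeros and cannot by itself certify that the count equals \emph{exactly} $N_{\min}$ with every zero real, which is what the characterization in Theorem \ref{thm:min} demands; without the explicit $q_m$ and Lemma \ref{lem:Jacobi} the argument does not close.
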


\begin{proof}
We first consider the case $\CW_{\a,\b,-\f12}$. By Theorem \ref{thm:min}, we need to find $m+2$ orthogonal polynomials
of degree $n$ so that they have $N_{\min}(n)$ many real common zeros. Let $\cos \t_{k,m}$, $1 \le k \le m$, be the zeros 
of the Jacobi polynomial $P_m^{(\a+1,\b)}$. Then $\t_{k,m}^{(\a+1,\b)}:=\t_{k,m} \in (0, \pi)$. Define 
$s_{j,k}^{(\a+1,\b)}:=  s_{j,k}$ and $t_{j,k}^{(\a+1,\b)}:=  t_{j,k}$ by
$$
  s_{j,k} = \cos \tfrac{\t_{j,m} - \t_{k,m}}2, \quad  t_{j,k} = \cos \tfrac{\t_{j,m} + \t_{k,m}}2, \quad 1 \le j \le k \le m.
$$
It is easy to see that the variety of the ideal $\la {}_2Q_{k,2m+1}^{\a,\b,-\f12}: 0 \le k \le m\ra$ is equal to 
$X_m  \cup \{(x,y): x=y\}$, where $X_m: = X_m^{(\a+1,\b)}$ is defined by 
$$
X_m^{(\a+1,\b)}: = \{(s_{j,k}, t_{j,k}), (t_{j,k}, s_{j,k}), (-s_{j,k}, -t_{j,k}), (-t_{j,k}, -s_{j,k}), \, 1 \le j \le k \le m\},
$$
and the cardinality of $X_m$ is $|X_m| = 4 m(m+1)/2 = 2 m(m+1)$. and
$X_m \cap \{(x,y):x=y\} = \emptyset$. We now construct one more orthogonal
polynomial of degree $n$ from $ {}_1Q_{k,2m+1}^{\a,\b, -\f12}$. By the orthogonality of the Jacobi polynomials, 
$$
p_n^{(\a+1,\b)}(t) = \sum_{k=0}^n d_k p_k^{(\a,\b+1)}(t), \quad d_k =
   \int_{-1}^1 p_n^{(\a+1,\b)}(t)p_k^{(\a,\b+1)}(t)w_{\a,\b+1}^*(t)dt,
$$
where $w_{\a,\b}(t): = (1-t)^\a(1+t)^\b$ is the Jacobi weight function and $w_{\a,\b}^*(t) = c_{\a,\b} w_{\a,\b}(t)$ 
is normalized so that that $\int_{-1}^1 w_{\a,\b}^*(t) dt =1$. Hence, taking the linear combination of 
${}_1Q_{k,2m+1}^{\a,\b, -\f12}$, we see that the polynomial
\begin{align*}
  q_m (x,y) = (x+y) & \left[p_m^{(\a,\b+1)}(\cos (\t-\phi))p_m^{(\a+1,\b)}(\cos (\t+\phi)) \right. \\
          & \left. + p_m^{(\a,\b+1)}(\cos (\t+\phi))  p_m^{(\a+1,\b)}(\cos (\t-\phi))\right]
\end{align*}
is an element of $\CV_{2m+1}(\CW_{\a,\b,-\f12})$. By its construction, $q_m$ vanishes on $X_m^{\a,\b+1} 
\cup X_m^{\b+1, \a}$. Furthermore, as we shall see in Lemma \ref{lem:Jacobi} below, the polynomial  
\begin{align*}
 q_m(x,x) = &  2 x \left[p_m^{(\a,\b+1)}(1)p_m^{(\a+1,\b)}(\cos 2\t) + p_m^{(\a,\b+1)}(\cos 2\t)  p_m^{(\a+1,\b)}(1)\right]\\
 = &  2 x \left[p_m^{(\a,\b+1)}(1)p_m^{(\a+1,\b)}(2x^2-1) + p_m^{(\a,\b+1)}(2 x^2-1) p_m^{(\a+1,\b)}(1)\right]
\end{align*}
has $2m+1$ distinct zeros, so that $q_m(x,y)$ has $2m+1$ zeros of the form $(\xi_{j,m}, \xi_{j,m})$, of which 
one is $(0,0)$. Since ${}_2Q_{k,2m+1}^{\a,\b,-\f12}$ contains a factor $x-y$, we see it also has 
$\{(\xi_{j,m},\xi_{j,m}): 0 \le j \le m\}$ as zeros. Consequently, the ideal 
$
 \la q_m^{(\a,\b)},  {}_2Q_{k,2m+1}^{\a,\b,-\f12}: 0 \le k \le m\ra
$
has a variety of zero dimension that is of of a cardinality 
$$
2m(m+1) + 2m = 2 (m+1)^2 -1 = N_{\min}(2m+1).
$$
This completes the proof for $\CW_{\a,\b,-\f12}$. 

The proof for the weight function $\CW_{\a,\b,\f12}$ is similar.
Indeed, it is easy to see that the ideal $\la {}_2Q_{k,2m+1}^{\a,\b, \f12}: 0 \le k \le m\ra$ is equal to 
$X_m  \cup \{(x,y): x= - y\}$, where $X_m$ is defined as above, but with $1 \le j < k \le m+1$ and $s_{j,k}, 
t_{j,k}$ defined in terms of $\t_{k,m+1}^{(\a+1,\b)}$ for $1 \le k \le m+1$. The additional orthogonal polynomial
in the polynomial idea is defined by the $(x-y)$ multiple of 
\begin{align*}
\frac{p_{m+1}^{(\a,\b+1)}(\cos (\t-\phi))p_{m+1}^{(\a+1,\b)}(\cos (\t+\phi))
   - p_{m+1}^{(\a,\b+1)}(\cos (\t+\phi))  p_{m+1}^{(\a+1,\b)}(\cos (\t-\phi))}{\sin \t\sin \phi},
\end{align*}
where we have used the fact that $P_{k,m}^{\a,\b,-1/2}(s,t) =0$ if $k = m+1$. 
\end{proof}

We still need to state and prove the following lemma: 

\begin{lem}\label{lem:Jacobi}
For $\a,\b > -1$ and $m =1,2,\ldots$, 
\begin{enumerate}[ \quad (1)]
 \item The Jacobi polynomials $P_m^{(\a,\b+1)}$ and $P_m^{(\a+1,\b)}$ have no common zeros.
 \item The polynomial $P_m^{(\a,\b+1)}(1)P_m^{(\a+1,\b)}(2t^2-1) + P_m^{(\a+1,\b)}(1) P_m^{(\a,\b+1)}(2t^2-1)$ has
 $2m$ simple zeros inside $(-1,1)$.  
\end{enumerate}
\end{lem}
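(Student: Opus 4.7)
The plan is to handle (1) with a short identity manipulation and reduce (2) to an interlacing statement for two families of Jacobi zeros.

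For (1), I will use three standard Jacobi polynomial identities:
\begin{align*}
 P_m^{(\a+1,\b)}(u) - P_m^{(\a,\b+1)}(u) &= P_{m-1}^{(\a+1,\b+1)}(u), \\
 (1-u) P_m^{(\a+1,\b)}(u) + (1+u) P_m^{(\a,\b+1)}(u) &= 2 P_m^{(\a,\b)}(u), \\
 \tfrac{d}{du} P_m^{(\a,\b)}(u) &= \tfrac{m+\a+\b+1}{2}\, P_{m-1}^{(\a+1,\b+1)}(u).
\end{align*}
A common zero $u_0$ cannot occur at $\pm 1$, since $P_m^{(\a,\b+1)}(\pm 1)$ and $P_m^{(\a+1,\b)}(\pm 1)$ are nonzero binomial expressions. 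Hence $u_0 \in (-1,1)$, and the first two identities force $P_{m-1}^{(\a+1,\b+1)}(u_0) = 0$ and $P_m^{(\a,\b)}(u_0) = 0$; the third then makes $u_0$ a double zero of $P_m^{(\a,\b)}$, contradicting simplicity of Jacobi zeros.

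For (2), the substitution $u = 2t^2 - 1$ reduces the claim to showing that the degree-$m$ polynomial
\[
 h(u) := P_m^{(\a,\b+1)}(1)\, P_m^{(\a+1,\b)}(u) + P_m^{(\a+1,\b)}(1)\, P_m^{(\a,\b+1)}(u)
\]
has $m$ simple zeros in $(-1,1)$. Indeed $h(-1) \ne 0$ (a binomial computation), so the map $u = 2t^2-1$ lifts each simple zero $u_0 \in (-1,1)$ of $h$ to a pair of simple zeros $t = \pm \sqrt{(u_0+1)/2}$ of the stated polynomial in $(-1,1)\setminus\{0\}$. To handle $h$, I establish the strict interlacing $x_1 < y_1 < x_2 < y_2 < \cdots < x_m < y_m$, where $x_j$ are the zeros of $p := P_m^{(\a+1,\b)}$ and $y_j$ those of $q := P_m^{(\a,\b+1)}$, in two stages. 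First, using the recurrence
\[
 (2m+\a+\b+1)\, P_m^{(\a,\b)} = (m+\a+\b+1)\, q + (m+\a)\, P_{m-1}^{(\a,\b+1)}
\]
together with the standard interlacing of $P_{m-1}^{(\a,\b+1)}$ with $q$, one finds that $\operatorname{sign} P_m^{(\a,\b)}(y_j)$ alternates with $j$, producing $m-1$ zeros of $P_m^{(\a,\b)}$ in $(y_1,y_m)$, one per subinterval; Markov monotonicity ($\xi_j < y_j$ for the zeros $\xi_j$ of $P_m^{(\a,\b)}$) then forces the remaining zero $\xi_1$ to lie in $(-1,y_1)$, yielding the interlacing $\xi_1 < y_1 < \xi_2 < y_2 < \cdots < \xi_m < y_m$. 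Now evaluating $(1-u)p(u) + (1+u)q(u) = 2P_m^{(\a,\b)}(u)$ at each $y_j$ gives $\operatorname{sign} p(y_j) = \operatorname{sign} P_m^{(\a,\b)}(y_j) = (-1)^{m-j}$; this alternation places a zero of $p$ in each of $(-1,y_1), (y_1,y_2), \ldots, (y_{m-1},y_m)$, which is the desired $x$-$y$ interlacing.

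With this interlacing in hand, $h(x_i) = P_m^{(\a+1,\b)}(1)\, q(x_i)$ has sign $(-1)^{m-i+1}$, giving $m-1$ sign changes of $h$ among its values at $x_1,\dots,x_m$; combined with $\operatorname{sign} h(-1) = (-1)^m$ and $h(1) = 2\, P_m^{(\a,\b+1)}(1)\, P_m^{(\a+1,\b)}(1) > 0$, there is one additional sign change in $(x_m,1)$ and none in $(-1,x_1)$, for a total of $m$ sign changes; since $\deg h = m$, these account for all zeros and they are simple. The main obstacle in this plan is the two-step interlacing argument: the individual identities are classical, but the bookkeeping of zeros across the four families $P_m^{(\a,\b)}, P_m^{(\a,\b+1)}, P_m^{(\a+1,\b)}, P_{m-1}^{(\a,\b+1)}$ is the delicate part.
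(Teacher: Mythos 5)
Your proof of part (1) coincides with the paper's: both combine $(1-t)P_m^{(\a+1,\b)}+(1+t)P_m^{(\a,\b+1)}=2P_m^{(\a,\b)}$ and $P_m^{(\a+1,\b)}-P_m^{(\a,\b+1)}=P_{m-1}^{(\a+1,\b+1)}$ with the derivative formula to manufacture a double zero of $P_m^{(\a,\b)}$. For part (2) you take a genuinely different, and correct, route. The paper uses the contiguous relations (Abramowitz--Stegun (22.7.18)--(22.7.19)) together with $P_m^{(\a+1,\b)}(1)=\frac{m+\a+1}{\a+1}P_m^{(\a,\b+1)}(1)$ to rewrite $h(u)$ as a positive constant times an order-one quasi-orthogonal polynomial $P_m^{(\a+1,\b+1)}(u)-c\,P_{m-1}^{(\a+1,\b+1)}(u)$, and then quotes a theorem on quasi-orthogonal polynomials (using the same endpoint signs $h(1)>0$ and $(-1)^m h(-1)>0$ that you compute) to conclude there are $m$ simple zeros in $(-1,1)$. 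You instead establish the strict interlacing $x_1<y_1<\cdots<x_m<y_m$ of the zeros of $P_m^{(\a+1,\b)}$ and $P_m^{(\a,\b+1)}$ via the relation $(2m+\a+\b+1)P_m^{(\a,\b)}=(m+\a+\b+1)P_m^{(\a,\b+1)}+(m+\a)P_{m-1}^{(\a,\b+1)}$ and a sign count, and then count sign changes of $h$ at $x_1,\dots,x_m$ and the endpoints; I have checked the signs and the identities, and the argument closes. Your route is longer and the zero bookkeeping is, as you say, the delicate part, but it is self-contained --- no appeal to the quasi-orthogonality theorem of \cite{X94b} is needed --- and the interlacing you prove immediately re-proves part (1) as a byproduct; the paper's route is shorter but hinges on computing the coefficient of $P_{m-1}^{(\a+1,\b+1)}$ correctly and on the external theorem. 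Both arguments then lift the $m$ simple zeros in $u\in(-1,1)$ to $2m$ simple nonzero zeros in $t$ via $u=2t^2-1$, using $h(-1)\neq 0$. One cosmetic remark: your claim of ``none in $(-1,x_1)$'' follows from $\deg h=m$ once the $m$ forced sign changes in $(x_1,1)$ are counted, not from the sign of $h(-1)$ alone, but this does not affect the conclusion.
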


\begin{proof}
Assuming $x_0$ is a common zero of $P_m^{(\a+1,\b)}$ and $P_m^{(\a,\b+1)}$. Then, by the identity
\cite[(22.7.17)]{AS}
\begin{align} \label{eq:Jacobi1}
  (1-t) P_m^{(\a+1,\b)}(t)+(1+t) P_m^{(\a,\b+1)}(t)= 2 P_{m}^{(\a,\b)}(t),
\end{align}
we see that $x_0$ is a zero of $P_{m}^{(\a,\b)}$ and, by the identity \cite[(22.7.20)]{AS}
\begin{align} \label{eq:Jacobi3}
   P_m^{(\a+1,\b)}(t) - P_m^{(\a,\b+1)}(t) = P_{m-1}^{(\a+1,\b+1)}(t),
\end{align}
$x_0$ is also a zero of $P_{m-1}^{(\a+1,\b+1)}$ or a zero of the derivative of $P_m^{(\a,\b)}$, which contradicts 
the fact that the zeros of the Jacobi polynomials are simple. This proves (1). 

For (2), let $q_m(t): = P_m^{(\a,\b+1)}(1)P_m^{(\a+1,\b)}(t) + P_m^{(\a+1,\b)}(1) P_m^{(\a,\b+1)}(t)$. We need two more identities 
of the Jacobi polynomials \cite[(22.7.18) and (22.7.19)]{AS} 
\begin{align}\label{eq:Jacobi2}
\begin{split}
  P_{m}^{(\a+1,\b+1)}(t) + \frac{m+\a+1}{m+\g} P_{m-1}^{(\a+1,\b+1)}(t) & =
     \frac{2m+\g}{m+\g} P_m^{(\a+1,\b)}(t),\\
  P_{m}^{(\a+1,\b+1)}(t) - \frac{m+\b+1}{m+\g} P_{m-1}^{(\a+1,\b+1)}(t) & =  
    \frac{2m+\g}{m+\g} P_m^{(\a,\b+1)}(t),
\end{split}
\end{align}
where $\g = \a+\b+2$. Together with $P_m^{(\a+1,\b)}(1) = \binom{m+\a+1}{m} = \frac{m+\a+1}{\a+1} P_m^{(\a,\b+1)}(1)$, 
we can then rewrite $q_m(t)$ as 
\begin{align*}
  q_m(t)  = & \frac{(m+\g)(m+2\a+2)}{(2m+\g)\a+1} \binom{m+\a}{m} \\ 
     & \times  \left[P_m^{(\a+1,\b+1)}(t) - 
      \frac{(m+\a+1)(m+\b-\a)}{(m+\g)(m+2\a+2)} P_{m-1}^{(\a+1,\b+1)}(t) \right]. 
\end{align*}
By its definition, $q_m(1) > 0$ and $(-1)^m q_m(-1) > 0$, which are sufficient to imply that the quasi-orthogonal 
polynomial in the bracket has $n$ simple zeros in $(-1,1)$ (see, e.g., \cite[Theorem 5.3]{X94b}). 
\end{proof}

It should be mentioned that the existence of these cubature rules are known only for $\CW_{-\f12, -\f12, \f12}(x,y) =
\sqrt{1-x^2}\sqrt{1-y^2}$, the product Chebyshev weight of the second kind (\cite{MP}, it is new even for 
$\CW_{-\f12, -\f12, \f12}(x,y) = 1/(\sqrt{1-x^2}\sqrt{1-y^2})$, the product Chebyshev weight of the first kind.

\begin{rem}
The minimal cubature rules are not unique. Indeed, exchanging the role of $\a$ and $\b$, starting with
${}_1Q_{k,2m+1}^{\a,\b,-\f12}$ instead of ${}_2Q_{k,2m+1}^{\a,\b,-\f12}$, we can obtain another cubature 
rule of degree $2n-1$ with $N_{\min}(n)$ nodes for $n=2m+1$, which has $2m$ nodes on the diagonal 
$x+y=0$ of the square $[-1,1]^2$ rather than on the diagonal $x-y=0$. 
\end{rem}

We depict the nodes of minimal cubature rules of degree $4m+1$ in Figure 1 for the weigh function
$$
  \frac{1}{\sqrt{1-x^2}\sqrt{1-y^2}} \quad \hbox{and} \quad \frac{|x-y|^2 |x+y|^2}{\sqrt{1-x^2}\sqrt{1-y^2}}.
$$ 
\begin{figure} [ht] 
\includegraphics[scale=0.45]{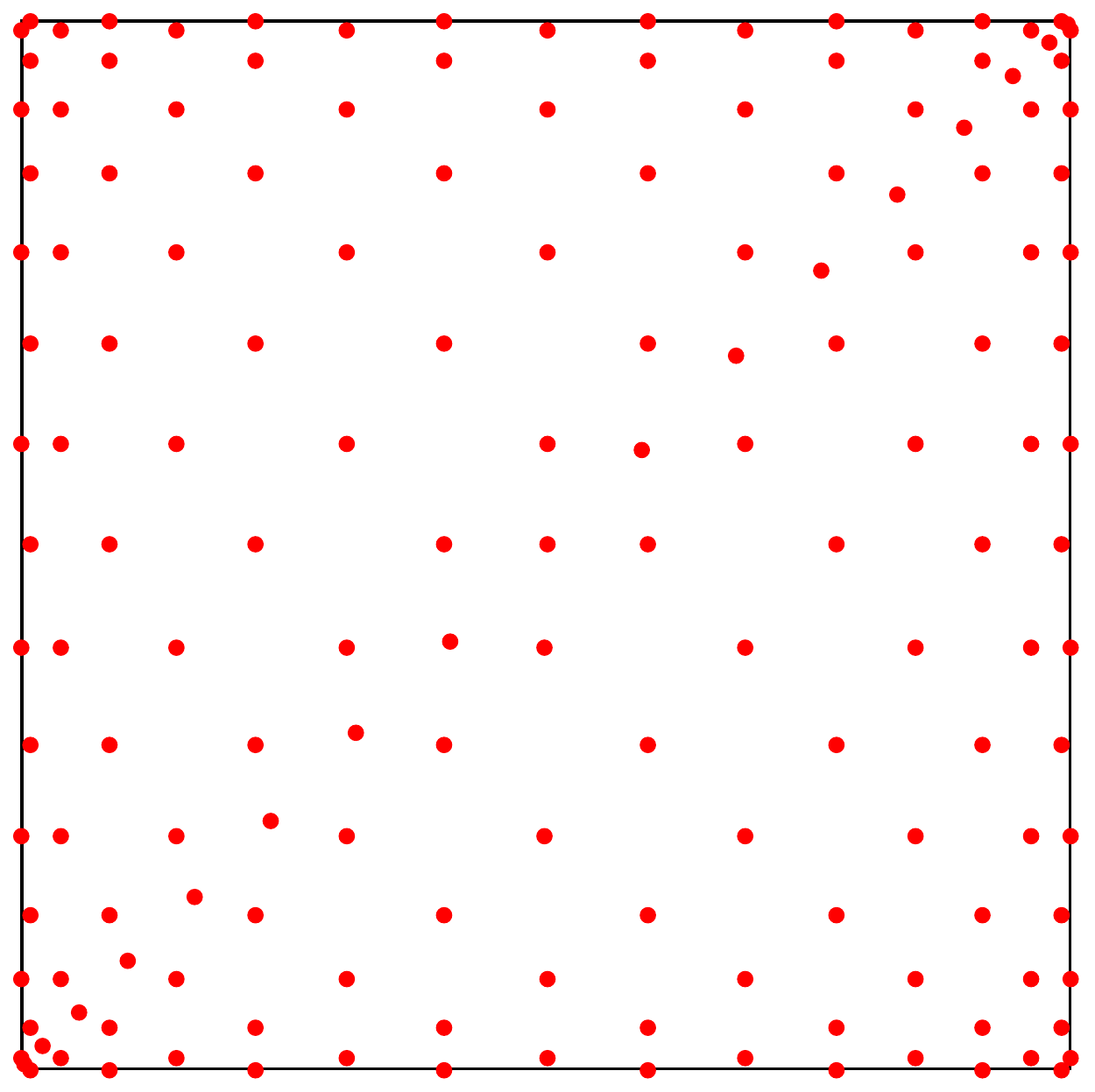} \qquad \includegraphics[scale=0.45]{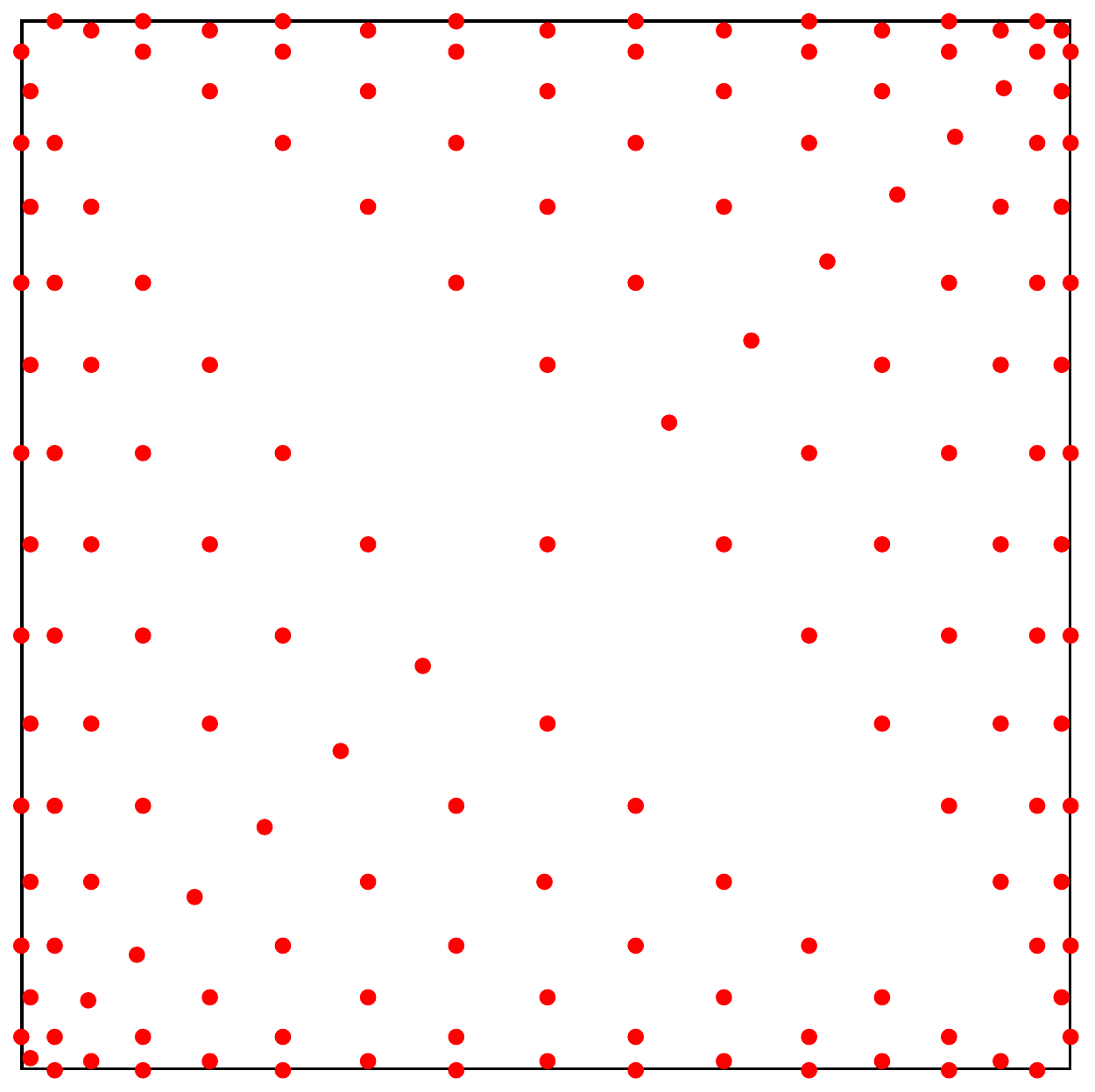}
\caption{Nodes of minimal cubature rules of degree 33 for $\CW_{-\frac12, -\frac12, - \frac12}$ and
   $\CW_{\f12,\f12,- \frac12}$ }.
\end{figure} 

Since the nodes of the minimal cubature rule are explicitly given in the proof of Theorem \ref{thm:minCuba}, 
the coefficients $\l_{k}$ of the cubature rule \eqref{eq:CF} can be found by solving a linear system of moment 
equations. However, it does not look to be easy to give these coefficients explicitly. In the following, we construct 
explicitly another set of cubature rules of degree $4m+1$ for $\CW_{\a,\b, \pm \f12}$, which are near minimal 
with $N_{\min}(2m+1)+1$ 
number of nodes, just one more than that of the minimal cubature rules. These cubature rules are derived from 
the Gauss-Radau quadrature rule for the Jacobi weight and, in this regard, they are similar to the construction 
for the case of $n=2m$ in \cite{X12}, which is based on the Gauss quadrature rules for the Jacobi weight. 
 
For $\a,\b > -1$, recall that  $w_{\a,\b}^*(t) = c_{\a,\b} w_{\a,\b}(t)$ is the normalized Jacobi weight function
and that the Gaussian quadrature rule of degree $2n-1$ for $w_{\a,\b}$ takes the form
$$
   \int_{-1}^1 f(t) w_{\a,\b}^*(t) dt = \sum_{k=1}^n \l_{k,n}^{(\a,\b)} f(x_{k,n}^{(\a,\b)}),
$$ 
where $x_{k,n}^{(\a,\b)} = \cos \t_{k,n}^{(\a,\b)}$, $1 \le k \le n$, are the zeros of the Jacobi polynomial $P_n^{(\a,\b)}$
and $\l_{k,n}^{(\a,\b)}$ are evaluations of the Christoffel function for $w_{\a,\b}$ at $x_{k,n}^{(\a,\b)}$. The 
Gauss-Radau quadrature rules for the weight $w_{\a,\b}$ are given in the following proposition.  

\begin{prop}
For every polynomial $f$ of degree at most $2n$, 
\begin{equation} \label{Gauss-Radau}
  \int_{-1}^1 f(t) w_{\a,\b}^*(t) dt = \mu_{0,n}^{(\a,\b)} f(1)+ \sum_{k=1}^n \mu_{k,n}^{(\a,\b)} f(x_{k,n}^{(\a+1,\b)}), 
\end{equation}
where 
$$
\mu_{0,n}^{(\a,\b)} = \frac{(\b+1)_n}{\binom{n+\a+1}{n}(\a+\b+1)_n}, \quad 
\mu_{k,n}^{(\a,\b)} =  \frac{2(\a+1)}{\a+\b+2} \cdot \frac{\l_{k,n}^{(\a+1,\b)}}{1-t^{(\a+1,\b)}}, \quad 1 \le k \le n.
$$
\end{prop}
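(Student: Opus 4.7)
The plan is to recognize \eqref{Gauss-Radau} as the standard Gauss--Radau quadrature for the Jacobi weight $w_{\a,\b}^*$ with preassigned node at $t = 1$, and to prove it by reducing to the ordinary Gaussian quadrature for the shifted weight $w_{\a+1,\b}^*$. For any $f \in \Pi_{2n}$, first write
$$
  f(t) = f(1) + (t-1)\, g(t), \qquad g \in \Pi_{2n-1}.
$$
A short Beta-integral computation of the ratio $c_{\a,\b}/c_{\a+1,\b}$ yields the identity $(1-t)\, w_{\a,\b}^*(t) = \frac{2(\a+1)}{\a+\b+2}\, w_{\a+1,\b}^*(t)$, whence
$$
  \int_{-1}^1 f(t)\, w_{\a,\b}^*(t)\, dt = f(1) - \frac{2(\a+1)}{\a+\b+2} \int_{-1}^1 g(t)\, w_{\a+1,\b}^*(t)\, dt.
$$

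Since $\deg g \le 2n-1$, the ordinary Gaussian quadrature with nodes $x_{k,n}^{(\a+1,\b)}$ and weights $\l_{k,n}^{(\a+1,\b)}$ is exact on $g$. Substituting $g(x_{k,n}^{(\a+1,\b)}) = (f(x_{k,n}^{(\a+1,\b)}) - f(1))/(x_{k,n}^{(\a+1,\b)} - 1)$ and collecting the coefficients of $f(1)$ and of $f(x_{k,n}^{(\a+1,\b)})$ produces a formula of the exact shape of \eqref{Gauss-Radau}. The coefficient of $f(x_{k,n}^{(\a+1,\b)})$ drops out as $\frac{2(\a+1)}{\a+\b+2} \cdot \frac{\l_{k,n}^{(\a+1,\b)}}{1 - x_{k,n}^{(\a+1,\b)}}$, matching the asserted expression for $\mu_{k,n}^{(\a,\b)}$, and exactness of the resulting rule on $\Pi_{2n}$ is automatic.

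To pin down the endpoint weight $\mu_{0,n}^{(\a,\b)}$ in closed form, I would test the just-derived rule on $f(t) = P_n^{(\a+1,\b)}(t)$, a polynomial of degree $n \le 2n$ that vanishes at every interior node and satisfies $P_n^{(\a+1,\b)}(1) = \binom{n+\a+1}{n}$. This reduces the claim to the evaluation of $\int_{-1}^1 P_n^{(\a+1,\b)}(t)\, w_{\a,\b}^*(t)\, dt$. The cleanest route is via the Rodrigues formula for $P_n^{(\a+1,\b)}$ followed by $n$ integrations by parts, whose boundary terms vanish because $(1-t)^{n+\a+1}(1+t)^{n+\b}$ and its derivatives of order $< n$ vanish at $t = \pm 1$; the integral then collapses to the single Beta integral $\int_{-1}^1 (1-t)^{\a}(1+t)^{n+\b}\, dt$. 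Folding in the normalizing constant $c_{\a,\b}$ and rewriting in Pochhammer form produces the claimed expression for $\mu_{0,n}^{(\a,\b)}$. The only real obstacle is the Gamma-function bookkeeping in this final step, which is routine but requires care.
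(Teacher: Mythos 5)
Your derivation of the interior weights is exactly the paper's: both arguments split $f(t)=f(1)+(t-1)g(t)$ with $g\in\Pi_{2n-1}$, absorb the factor $1-t$ into the weight via $(1-t)w_{\alpha,\beta}^*(t)=\frac{2(\alpha+1)}{\alpha+\beta+2}\,w_{\alpha+1,\beta}^*(t)$ (i.e.\ the ratio $c_{\alpha,\beta}/c_{\alpha+1,\beta}$), and apply the Gaussian rule for $w_{\alpha+1,\beta}$ at the zeros of $P_n^{(\alpha+1,\beta)}$, after which the coefficient of $f(x_{k,n}^{(\alpha+1,\beta)})$ reads off as claimed. Where you genuinely go beyond the paper is the endpoint weight: the paper simply cites Gautschi's formula (3.10) for $\mu_{0,n}^{(\alpha,\beta)}$, whereas you derive it by testing the rule on $P_n^{(\alpha+1,\beta)}$, which vanishes at every interior node, and then evaluating $\int_{-1}^1 P_n^{(\alpha+1,\beta)}(t)\,w_{\alpha,\beta}^*(t)\,dt$ by Rodrigues' formula and $n$ integrations by parts (the boundary terms do vanish, though at $t=1$ this needs the cancellation between the growth of the derivatives of $(1-t)^{-1}$ and the decay of those of $(1-t)^{n+\alpha+1}(1+t)^{n+\beta}$, not just the vanishing of the latter). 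That computation is sound and self-contained, but be aware of where it lands: it gives $\int_{-1}^1 P_n^{(\alpha+1,\beta)}\,w_{\alpha,\beta}^*\,dt=(\beta+1)_n/(\alpha+\beta+2)_n$, hence $\mu_{0,n}^{(\alpha,\beta)}=\frac{(\beta+1)_n}{\binom{n+\alpha+1}{n}(\alpha+\beta+2)_n}$, which differs from the Pochhammer symbol $(\alpha+\beta+1)_n$ printed in the proposition. A spot check at $\alpha=\beta=0$, $n=1$ (nodes $1$ and $-1/3$; exactness on quadratics forces $\mu_0=1/4$, and indeed $1/(n+1)^2$ is the classical Legendre Gauss--Radau endpoint weight) confirms your version and shows the paper's displayed constant is a typo, so do not be alarmed when your bookkeeping fails to reproduce it.
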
 

The existence of this quadrature rule is well-known. Explicit formulas for $\mu_{k,n}^{(\a,\b)}$ can be found 
in \cite[(3.10)]{G}, where $\a$ and $\b$ need to be exchanged and also multiplying by $c_{\a,\b}$. The 
formula of $\mu_{k,n}^{(\a,\b)}$ stated in the proposition, which will be needed later, can be derived as follows: 
writing $f(x) = (1-x) g(x) +f(1)$, where $g$ is a polynomial of degree at most $2n-1$, and applying the 
Gaussian quadrature rule with respect to $w_{\a+1,\b}$, we see that 
\begin{align*}
 \int_{-1}^1 f(x) w_{\a,\b}^*(x) dx   = & c_{\a,\b} \int_{-1}^1 g(x) w_{\a+1,\b}(x) dx + f(1) \\
  = & \frac{c_{\a,\b}}{c_{\a+1,\b}} \sum_{k=1}^n \l_{k,n}^{(\a+1,\b)} g(x_{k,n}^{(\a+1,\b)}) + f(1) \\
  = &\frac{c_{\a,\b}}{c_{\a+1,\b}}  \sum_{k=1}^n  \frac{\l_{k,n}^{(\a+1,\b)}}{1-t^{(\a+1,\b)}}
     \left(f(x_{k,n}^{(\a+1,\b)}) - f(1) \right) + f(1)
\end{align*}
for all $f$ of degree at most $2n$. The constant $c_{\a,\b}$ can be easily computed, so is $c_{\a,\b}/c_{\a+1,\b}$. 
This establishes the formula for $\mu_{k,n}^{(\a,\b)}$.

Let $\CW_{\a,\b,\pm \f12}^*: = b_{\a,\b,\pm \f12}\CW_{\a,\b,\pm \f12}$ with $b_{\a,\b,\pm \f12}$ being the 
normalization constant so that $\int_{[-1,1]^2} \CW^*_{\a,\b,\pm \f12}(x,y) dxdy =1$. For $m =0,1,\dots$, let 
$\t_{k,m} = \t_{k,m}^{(\a+1,\b)}$, $1 \le k \le m$, as before and we define $\t_{0,m} : =0$. We choose the order
of these number so that $0 = \t_{0,m} < \t_{1,m} < \ldots < \t_{m,m} < \pi$. Let 
\begin{align} \label{stjk}
  s_{j,k}: =  \cos \tfrac{\t_{j,m}-\t_{k,m}}{2} \quad\hbox{and}\quad
                t_{j,k} := \cos \tfrac{\t_{j,m} + \t_{k,m}}{2}, \quad 0 \le j \le k \le m,
\end{align}
where $ s_{j,k}= s_{j,k}^{(\a+1,\b)}$ and $t_{j,k}= t_{j,k}^{(\a+1,\b)}$. These are the same as we defined in
Theorem \ref{thm:minCuba}, but here we allow $j=0$ with $\t_{0,m} =0$. 

\begin{thm} \label{thm:cubaCW}
For $\CW_{\a,\b, -\frac12}$ on $[-1,1]^2$, we have the near minimal cubature rule of degree $4m+1$ 
with $N_{\min}(2m+1)+1$ nodes, 
\begin{align} \label{NearMin-}
    \int_{[-1,1]^2} &  f(x, y) \CW_{\a,\b, -\frac12}^* (x,y) dx dy  =
     \frac12 \sum_{k=0}^m \mathop{ {\sum}' }_{j=0}^k  \mu_{k,m}^{(\a,\b)} \mu_{j,m}^{(\a,\b)} \\ 
   & \times \left[ f( s_{j,k}, t_{j,k})+  f( t_{j,k}, s_{j,k}) +   f( - s_{j,k}, - t_{j,k})+  f(- t_{j,k},- s_{j,k})  \right],  \notag 
\end{align}
where $ {\sum}' $ means that the terms for $j =k$ are divided by 2. 
For $\CW_{\a,\b,\frac12}$ on $[-1,1]^2$, we have the minimal cubature rule of degree $4m-3$ with 
$N_{\min}(2m-1)+1$ nodes, 
\begin{align} \label{NearMin+}
    \int_{[-1,1]^2} &  f(x, y) \CW_{\a,\b, \frac12}^* (x,y) dx dy  =
     \frac12 \sum_{k=0}^m \mathop{ {\sum}' }_{j=0}^k  \mu_{k,m}^{(\a,\b)} \mu_{j,m}^{(\a,\b)} (\cos \t_{j,n} - \cos \t_{k,n})^2
      \notag \\ 
   & \times \left[ f( s_{j,k}, t_{j,k})+  f( t_{j,k}, s_{j,k}) +   f( - s_{j,k}, - t_{j,k})+  f(- t_{j,k},- s_{j,k})  \right]. 
\end{align}
\end{thm}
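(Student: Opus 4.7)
My plan is to reduce the integral against $\CW^*_{\a,\b,\pm \f12}$ on $[-1,1]^2$ to a product Jacobi integral in new variables and then apply the product of the Gauss--Radau rule \eqref{Gauss-Radau}. Because $\CW^*_{\a,\b,\pm \f12}$ is invariant under the two commuting involutions $(x,y)\mapsto(y,x)$ and $(x,y)\mapsto(-x,-y)$, any $f\in\Pi_d^2$ splits as $f=f_{++}+f_{+-}+f_{-+}+f_{--}$ under the resulting $\ZZ_2\times\ZZ_2$ action. The three noninvariant pieces integrate to zero against the weight, and one verifies directly that $f(s,t)+f(t,s)+f(-s,-t)+f(-t,-s)=4f_{++}(s,t)$, so they also vanish in the bracket on the right of \eqref{NearMin-}--\eqref{NearMin+}. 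Hence it suffices to check both formulas on $f_{++}$.

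Next I would substitute $x=\cos\t$, $y=\cos\phi$ and then set $u_1=\cos(\t+\phi)$, $u_2=\cos(\t-\phi)$. Using $x+y=2\cos((\t+\phi)/2)\cos((\t-\phi)/2)$, $x-y=-2\sin((\t+\phi)/2)\sin((\t-\phi)/2)$ and $\sin\t\sin\phi=(u_2-u_1)/2$ together with the Jacobian of the substitution, the $(1-x^2)^{\mp\f12}(1-y^2)^{\mp\f12}$ factor of the weight cancels so that
\begin{align*}
\CW^*_{\a,\b,-\f12}(x,y)\,dx\,dy &= w^*_{\a,\b}(u_1)\,w^*_{\a,\b}(u_2)\,du_1\,du_2,\\
\CW^*_{\a,\b,\f12}(x,y)\,dx\,dy &= \tfrac14(u_1-u_2)^2\,w^*_{\a,\b}(u_1)\,w^*_{\a,\b}(u_2)\,du_1\,du_2,
\end{align*}
the overall constants being fixed by $\int\CW^*_{\a,\b,\pm\f12}=1$. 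Moreover $f_{++}$ is a polynomial in the $\ZZ_2\times\ZZ_2$-invariants $x^2+y^2=u_1u_2+1$ and $xy=(u_1+u_2)/2$, so $H(u_1,u_2):=f_{++}(x,y)$ is a symmetric polynomial whose degree in each of $u_1,u_2$ is at most half the $(x,y)$-degree of $f$.

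The product Gauss--Radau rule built from \eqref{Gauss-Radau} with $m+1$ nodes in each variable ($u_0=1$ at $\t_{0,m}=0$ together with $u_k=\cos\t_{k,m}^{(\a+1,\b)}$, $1\le k\le m$) is exact on $\Pi_{2m}\otimes\Pi_{2m}$. For $f\in\Pi_{4m+1}^2$ the polynomial $H$ has degree $\le 2m$ in each variable, so the rule applies in the $-\f12$ case; for $f\in\Pi_{4m-3}^2$ the polynomial $H(u_1-u_2)^2$ has degree $\le 2m$ in each variable, so it applies in the $+\f12$ case. The node $(u_k,u_j)$ corresponds via $\t=(\t_{k,m}+\t_{j,m})/2$, $\phi=(\t_{k,m}-\t_{j,m})/2$ precisely to $(x,y)=(t_{j,k},s_{j,k})$ of \eqref{stjk}. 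Folding the symmetric double sum $\sum_{j,k=0}^m\mu_k\mu_j[\cdots]$ along $(j,k)\leftrightarrow(k,j)$ into $\sum_{k=0}^m{\sum}'_{j=0}^k\mu_k\mu_j[\cdots]$ and inserting $4f_{++}(s,t)=f(s,t)+f(t,s)+f(-s,-t)+f(-t,-s)$ then reproduces \eqref{NearMin-} and \eqref{NearMin+}. The main obstacle is the change-of-variables computation: one must carry it out carefully and verify that on invariant integrands the Jacobian, the $(1-x^2)^{\pm\f12}(1-y^2)^{\pm\f12}$ factor and the normalisation combine into exactly the product Jacobi weight (times $(u_1-u_2)^2/4$ in the $+\f12$ case); the remaining steps amount to a routine rearrangement of the index sums.
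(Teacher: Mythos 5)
Your argument is essentially the paper's own: both rest on the product Gauss--Radau rule \eqref{Gauss-Radau}, the change of variables carrying the product Jacobi weight to $\CW^*_{\a,\b,\pm\f12}$ (you do it in one trigonometric step, the paper in two steps through the domain $\Omega$), and a symmetrization over the group generated by $(x,y)\mapsto(y,x)$ and $(x,y)\mapsto(-x,-y)$ --- your explicit $\ZZ_2\times\ZZ_2$ isotypic decomposition is just a hands-on substitute for the paper's appeal to Sobolev's theorem on invariant cubature, and your degree bookkeeping for $f_{++}$ and $H$ is sound. The one piece of the statement you leave unverified is the node count: you should note that for $j=0$ one has $s_{0,k}=t_{0,k}$, so those orbits contribute only two points instead of four, giving $2m(m+1)+2(m+1)=2(m+1)^2=N_{\min}(2m+1)+1$ distinct nodes as claimed.
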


\begin{proof}
We consider $\CW_{\a,\b, -\frac12}$ first. Let $x_{k,m} = \cos \t_{j,m}^{(\a+1,\b)}$. Applying the product  
Gaussian-Radau cubature rule
$$
 \int_{[-1,1]^2}f(x,y)w_{\a,\b}^*(x)w_{\a,\b}^*(y) dxdy 
   = \sum_{j=0}^m \sum_{k=0}^m \mu_{j,m}^{(\a,\b)} \mu_{k,m}^{(\a,\b)} f(x_{j,m},x_{k,m}),
$$
which holds for $f \in \Pi^2_{2m} \times \Pi_{2m}^2$, on the functions $f(x+y,xy)$, it follows, by symmetry and 
changing variables $u= x+y$ and $v=xy$ on $\{(x,y)\in [-1,1]^2: y > x\}$, that
\begin{align} \label{productCR}
 \int_\Omega f(u,v) W_{\a,\b, -\f12}^*(x,y) dxdy = 2 \sum_{k=0}^m \mathop{ {\sum}' }_{j=0}^k \mu_{j,m}^{(\a,\b)} 
  \mu_{k,m}^{(\a,\b)} f(u_{j,k},v_{j,k}),
\end{align}
where $u_{j,k} = x_{j,m}+x_{k,m}$ and $v_{j,k} = x_{j,m} x_{k,m}$, and this cubature rule is known to hold for
all polynomials of degree up to $2m$ \cite{SX}. Consequently, by \eqref{Int-P-Q}, we see that 
$$
 \int_{[-1,1]^2} f(2xy,x^2+y^2-1) \CW_{\a,\b,-\f12}^*(x,y) dxdy = 2
    \sum_{k=0}^m \mathop{ {\sum}' }_{j=0}^k  \mu_{j,n}^{(\a,\b)} \mu_{k,n}^{(\a,\b)} f(u_{j,k},v_{j,k})
$$
for all $f \in \Pi_{2m}^2$. It is easy to verify that $u_{j,k} = \cos\t_{j,m}+\cos\t_{k,m} = 2 s_{j,k} t_{j,k}$ and
$v_{j,k} = \cos\t_{j,m}\cos\t_{k,m} = s_{j,k}^2+ t_{j,k}^2-1$. Hence, by the Sobolev theorem on invariant cubature 
rules \cite{Sobolev}, which states that a cubature rule invariant under a finite group $G$ is exact for all elements 
of a subspace $\CP$ of polynomials if, and only if, it is exact for all polynomials in $\CP$ that are invariant under 
$G$, the last cubature rule implies \eqref{NearMin-} for $f\in \Pi_{4m}^2$. Furthermore, since 
the weight function $\CW_{\a,\b,-\f12}$ is invariant under $(x,y) \mapsto (-x,-y)$ and so is the right hand side of
\eqref{NearMin-}, it follows that the cubature \eqref{NearMin-} holds for $f \in \Pi_{4m+1}$.  When $j = 0$, 
$s_{0,k} = t_{0,k}$, so that $(t_{0,k}, s_{0,k}) = (s_{0,k},t_{0,k})$ for $0 \le k \le m$. Hence, the number of nodes
of the cubature rule in \eqref{NearMin-} is 
$$
2 m(m+1) + 2(m+1) = 2(m+1)^2 = N_{\min}(2m+1)+1.
$$
This completes the proof in the case of $\CW_{\a,\b,-\f12}$. The proof for \eqref{NearMin+} can be carried out
similarly, starting from applying the product cubature rule \eqref{productCR} on $(x-y) f(x+y, xy)$, as in \cite{X12}. 
We skip the details.
\end{proof}

In the case of the product Chebyshev weight function $\CW_{-\frac12,-\frac12,-\frac12}$, the cubature rule
\eqref{NearMin-} is established in \cite{X94} and later in \cite{BP, LSX} using two other methods. In view of 
Theorem \ref{thm:min+1}, one natural question is what are the polynomials that vanish on the nodes of such 
a cubature rule; in other words, what is the polynomial idea whose variety is the set of nodes of such a 
cubature rule. The idea is identified in the proof of the following theorem. 

\begin{thm}
For $n = 2m+1$, the set of nodes of the cubature rules of degree $2n-1=4m+1$ in \eqref{NearMin-} is the variety of
the polynomial idea generated by ${}_2Q_{k,2m+1}^{\a,\b,-\f12}$, $0 \le k \le m$, and $(2n-1)$-orthogonal
polynomials of the form $\QQ_{2m+2} + \Gamma \QQ_{2m}$, where 
$\QQ_{2m} = \{{}_1Q_{k,2m}^{\a,\b,-\f12}, 0 \le k \le m\}\cup \{{}_2Q_{k,2m}^{\a,\b,-\f12}, 0 \le k \le m-1\}$.
\end{thm}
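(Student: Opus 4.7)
The plan is to combine Theorem \ref{thm:cubaCW}, which supplies the near-minimal cubature rule \eqref{NearMin-} with $|V|=N_{\min}(2m+1)+1=2(m+1)^2$ nodes of degree $4m+1$, with the ideal-theoretic characterization of Theorem \ref{thm:min+1}. With $n=2m+1$ the latter asserts $\CI(V)$ is generated by $\lfloor (n+1)/2\rfloor = m+1$ orthogonal polynomials of degree $n$ together with the $n+2=2m+3$ components of a $(2n-1)$-orthogonal $\QQ_n = \PP_{n+1}+\Gamma_1\PP_n+\Gamma_2\PP_{n-1}$. My aim is to identify the first family as $\{{}_2Q_{k,2m+1}^{\a,\b,-\f12}\}_{k=0}^m$, and to reduce the genuinely new components of $\QQ_n$ to the claimed form $\QQ_{2m+2}+\Gamma\,\QQ_{2m}$.

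For the first family I would verify pointwise vanishing on $V$. Proposition \ref{prop:OP} gives the explicit factorization ${}_2Q_{k,2m+1}^{\a,\b,-\f12}(x,y)=c\,(x-y)P_{k,m-1}^{\a+1,\b,-\f12}(2xy,x^2+y^2-1)$. The nodes arising from $j=0$ in \eqref{stjk} satisfy $s_{0,k}=t_{0,k}=\cos(\t_{k,m}/2)$ (since $\t_{0,m}=0$), so they and their antipodes lie on the diagonal $x=y$, where the factor $(x-y)$ vanishes. The remaining nodes, those with $1\le j\le k\le m$, are exactly the finite set $X_m^{(\a+1,\b)}$ from the proof of Theorem \ref{thm:minCuba}, which was shown there to lie in the joint zero locus of $\{{}_2Q_{k,2m+1}^{\a,\b,-\f12}\}_{k=0}^m$. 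Hence each ${}_2Q_{k,2m+1}^{\a,\b,-\f12}\in\CI(V)$.

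For the additional generators I would invoke the parity remark following Theorem \ref{thm:min+1}: since $n=2m+1$ is odd, $2(m+1)=2m+2$ of the $2m+3$ components of $\QQ_n$ can be taken to be $x_1$- and $x_2$-multiples of the $m+1$ polynomials of the first family, and these already lie in $\CI(V)$. Exactly one genuinely new $(2n-1)$-orthogonal polynomial of degree $2m+2$ is therefore required, and it a priori lies in $\CV_{2m}(W)\oplus\CV_{2m+1}(W)\oplus\CV_{2m+2}(W)$. By the three-term recurrence for $\CW_{\a,\b,-\f12}$, each $x_i\cdot{}_2Q_{k,2m+1}^{\a,\b,-\f12}$ decomposes into pieces in $\PP_{2m+2}$, $\PP_{2m+1}$, and $\PP_{2m}$, so a correctly chosen linear combination of these $x_i$-multiples (already in the ideal) may be subtracted from the new generator to cancel its $\CV_{2m+1}$-component and produce a representative of the required form $\QQ_{2m+2}+\Gamma\,\QQ_{2m}$.

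The main obstacle, I expect, is this last absorption step: one must show that the $\CV_{2m+1}$-projection of the $(2m+2)$-dimensional span of $\{x_i\cdot{}_2Q_{k,2m+1}^{\a,\b,-\f12}:i=1,2,\,0\le k\le m\}$ contains the $\CV_{2m+1}$-part of the new generator. The dimensions match, since $\dim\CV_{2m+1}(\CW_{\a,\b,-\f12})=2m+2$, and Theorem \ref{thm:min+1} applied existentially via \eqref{NearMin-} guarantees that some admissible $\Gamma$ exists; the remaining labor is a rank check on the blocks of the three-term recurrence matrices for $\CW_{\a,\b,-\f12}$ that couple the ${}_2Q_{k,2m+1}^{\a,\b,-\f12}$ to $\CV_{2m+1}$.
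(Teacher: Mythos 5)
Your verification that ${}_2Q_{k,2m+1}^{\a,\b,-\f12}$, $0\le k\le m$, vanish on the node set (the $j=0$ nodes lying on $x=y$ where the factor $x-y$ kills them, the rest being the set $X_m^{(\a+1,\b)}$ from Theorem \ref{thm:minCuba}) is correct and is essentially what the paper does. The gap is in the second half. First, your argument runs Theorem \ref{thm:min+1} backwards: that theorem says a near minimal cubature rule \emph{exists if} its nodes are such a variety; you are inferring from the existence of \eqref{NearMin-} that an admissible $\Gamma$ exists, which is the unproven converse. Second, and more seriously, the proposed ``absorption'' mechanism cannot work as described. Since $\CW_{\a,\b,-\f12}$ is centrally symmetric, the matrices $B_{n,i}$ in the three-term relation vanish, so $x_i\cdot{}_2Q_{k,2m+1}^{\a,\b,-\f12}$ has components only in $\CV_{2m+2}$ and $\CV_{2m}$ --- its projection onto $\CV_{2m+1}$ is identically zero. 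The span of the $2m+2$ multiples therefore projects to $\{0\}$ in $\CV_{2m+1}$, not onto it; the ``dimensions match'' observation is a red herring, and no rank check on those blocks can cancel a nonzero $\CV_{2m+1}$-component of a new generator. (What actually saves the statement is a parity argument: since the node set is invariant under $(x,y)\mapsto(-x,-y)$ and $\PP_{2m+2}$, $\PP_{2m}$ are even while $\PP_{2m+1}$ is odd, any $(2n-1)$-orthogonal polynomial vanishing on the nodes can be symmetrized to one with no $\CV_{2m+1}$-part. You neither invoke this nor establish it.)

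The paper takes a different and fully constructive route that you would need to supply: using the contiguous relations \cite[(22.7.15), (22.7.16)]{AS} and the first identity of \eqref{eq:Jacobi2}, it writes down all $n+2=2m+3$ quasi-orthogonal polynomials explicitly as combinations ${}_iQ_{k,2m+2}^{\a,\b,-\f12}+c\,{}_jQ_{k,2m}^{\a,\b,-\f12}$ (so $\Gamma_1=0$ by construction) and checks directly that each acquires a factor of the form $(1-\cos(\t\mp\phi))P_m^{(\a+1,\b)}(\cos(\t\mp\phi))$, which vanishes at every node including the $j=0$ ones where $1-\cos\t_{0,m}=0$. Without either this explicit construction or a correct abstract argument producing generators of the asserted form $\QQ_{2m+2}+\Gamma\QQ_{2m}$, the theorem is not proved; your proposal identifies the right target but defers precisely the step that constitutes the proof.
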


\begin{proof}
Let us denote by $X_m$ the set of nodes of the cubature rule in  \eqref{NearMin-}, 
$$
X_m: = \{(s_{j,k}, t_{j,k}), (t_{j,k}, s_{j,k}), (-s_{j,k}, -t_{j,k}), (-t_{j,k}, -s_{j,k}), 0 \le j \le k \le m\}.
$$
By the definition of $s_{j,k} = s_{j,k}^{(\a+1,\b)}$ and $t_{j,k}=t_{j,k}^{(\a+1,\b)}$ in \eqref{stjk}, it follows immediately
from the explicit formulas of orthogonal polynomials in Proposition \ref{prop:OP} that ${}_2Q_{k,2m+1}^{\a,\b,-\f12}$, 
$0 \le k \le m$, vanish on $X_m$, where we need the factor $x-y$ in the polynomials when $j=0$ since 
$s_{0,k} = t_{0,k}$. These account for $\lfloor \f{n+1}2 \rfloor = m+1$ orthogonal polynomials of degree $n$ specified
in Theorem \ref{thm:min+1}. Now, from the first of the following two identities \cite[(22.7.15) and (22.7.16)]{AS} 
\begin{align*} 
\begin{split}
(1+t) P_m^{(\a,\b+1)}(t)&\ = \frac{2(m+1)}{2m+\a+\b+2} P_{m+1}^{(\a,\b)}(t) + \frac{2(m+\b+1)}{2m+\a+\b+2} P_m^{(\a,\b)}(t),\\
(1-t) P_m^{(\a+1,\b)}(t)&\ = \frac{-2(m+1)}{2m+\a+\b+2} P_{m+1}^{(\a,\b)}(t) + \frac{2(m+\a+1)}{2m+\a+\b+2} P_m^{(\a,\b)}(t),
\end{split}
\end{align*}
it follow readily that, for $0 \le k \le m$,
\begin{align*}
& {}_1Q_{k,2m+2}^{\a,\b,-\f12}(x,y) - a_{k,m} \f{m+\a+1}{m+1}  {}_1Q_{k,2m}^{\a,\b,-\f12}(x,y) \\ 
&\quad  \doteq  (1-\cos (\t - \phi)) P_{m}^{(\a+1,\b)}(\cos (\t-\phi)) P_k^{(\a,\b)}(\cos (\t+\phi))  \\
   & \qquad + (1-\cos (\t +\phi)) P_m^{(\a+1,\b)}(\cos (\t+\phi)) P_k^{(\a,\b)}(\cos (\t-\phi)),  
\end{align*}
where $a_{k,m}$ is a constant coming from the normalization constants for the Jacobi polynomials and 
$\doteq$ means that the identity holds under a constant multiple, whereas for $k = m+1$, we have
\begin{align*}
& {}_1Q_{m+1,2m+2}^{\a,\b,-\f12}(x,y) - a_{m+1,m} \f{m+\a+1}{2(m+1)} {}_1Q_{m,2m+2}^{\a,\b,-\f12}(x,y) \\ 
&\quad  \doteq  P_{m+1}^{(\a,\b)}(\cos (\t-\phi))  (1-\cos (\t - \phi))P_m^{(\a+1,\b)}(\cos (\t+\phi))  \\
   & \qquad + (1-\cos (\t - \phi)) P_{m+1}^{(\a,\b)}(\cos (\t+\phi)) P_m^{(\a+1,\b)}(\cos (\t-\phi)), 
\end{align*}
from which it is easy to see that these polynomials vanish on $X_m$. Furthermore, using the first identity of
\eqref{eq:Jacobi2}, it is easy to see that, for $0 \le k \le m$, 
\begin{align*}
& {}_2Q_{k,2m+2}^{\a,\b,-\f12}(x,y) + a_{k,m} \f{m+\a+1}{m+1}  {}_1Q_{k,2m}^{\a,\b,-\f12}(x,y) \\ 
&\quad  \doteq (x^2-y^2)  \left[ P_{m}^{(\a+1,\b)}(\cos (\t-\phi)) P_{m+1}^{(\a,\b)}(\cos (\t+\phi)) \right. \\
   & \qquad \left. + P_{m-1}^{(\a+1,\b)}(\cos (\t+\phi)) P_{m+1}^{(\a,\b)}(\cos (\t-\phi))\right],  
\end{align*}
where $a_{k,m}$ is another constant coming from normalization of the Jacobi polynomials, whereas for $k = m+1$,
we have
\begin{align*}
& {}_2Q_{m+1,2m+2}^{\a,\b,-\f12}(x,y) - a_m \f{m+\a+1}{2(m+\a+\b+2)} {}_1Q_{m,2m+2}^{\a,\b,-\f12}(x,y) \\ 
&\quad  \doteq  (x^2-y^2) \left[ P_{m}^{(\a+1,\b+1)}(\cos (\t-\phi)) P_m^{(\a+1,\b)}(\cos (\t+\phi)) \right. \\
   & \qquad \quad\qquad\qquad \left. + P_{m}^{(\a+1,\b+1)}(\cos (\t+\phi)) P_m^{(\a+1,\b)}(\cos (\t-\phi)) \right], 
\end{align*}
from which it follows that these polynomials vanish on $X_m$ as well. Together, this shows that, with $n = 2m+1$, 
there are $n+2$ many $(2n-1)$-orthogonal polynomials of the form $P_{n+1}+ a P_{n-1}$, where $P_k\in 
\CV_k(\CW_{\a,\b})$. Together, they can be written in the form $\QQ_{n+1} + \Gamma \QQ_{n-1}$. This completes 
the proof. 
\end{proof}

\begin{rem}
By symmetry, we also obtain another cubature rule of degree $4m+1$ by reversing the role of $\a$ and $\b$. More 
precisely, the nodes of the corresponding cubature rule is $X_m$ with $s_{j,k}$ and $t_{j,k}$ given in terms of the 
zeros of the Jacobi polynomials of $P_n^{(\a,\b+1)}$. 
\end{rem}

We depict the nodes of cubature rules \eqref{NearMin-} of degree $4m+1$ in Figure 2. Apart from the nodes on the 
diagonal $x =y$, the distribution of these nodes are similar to those for the minimal cubature rules of degree $4m-1$ 
(see Figure 1 in \cite{X12}). 

\begin{figure} [ht] 
\includegraphics[scale=0.45]{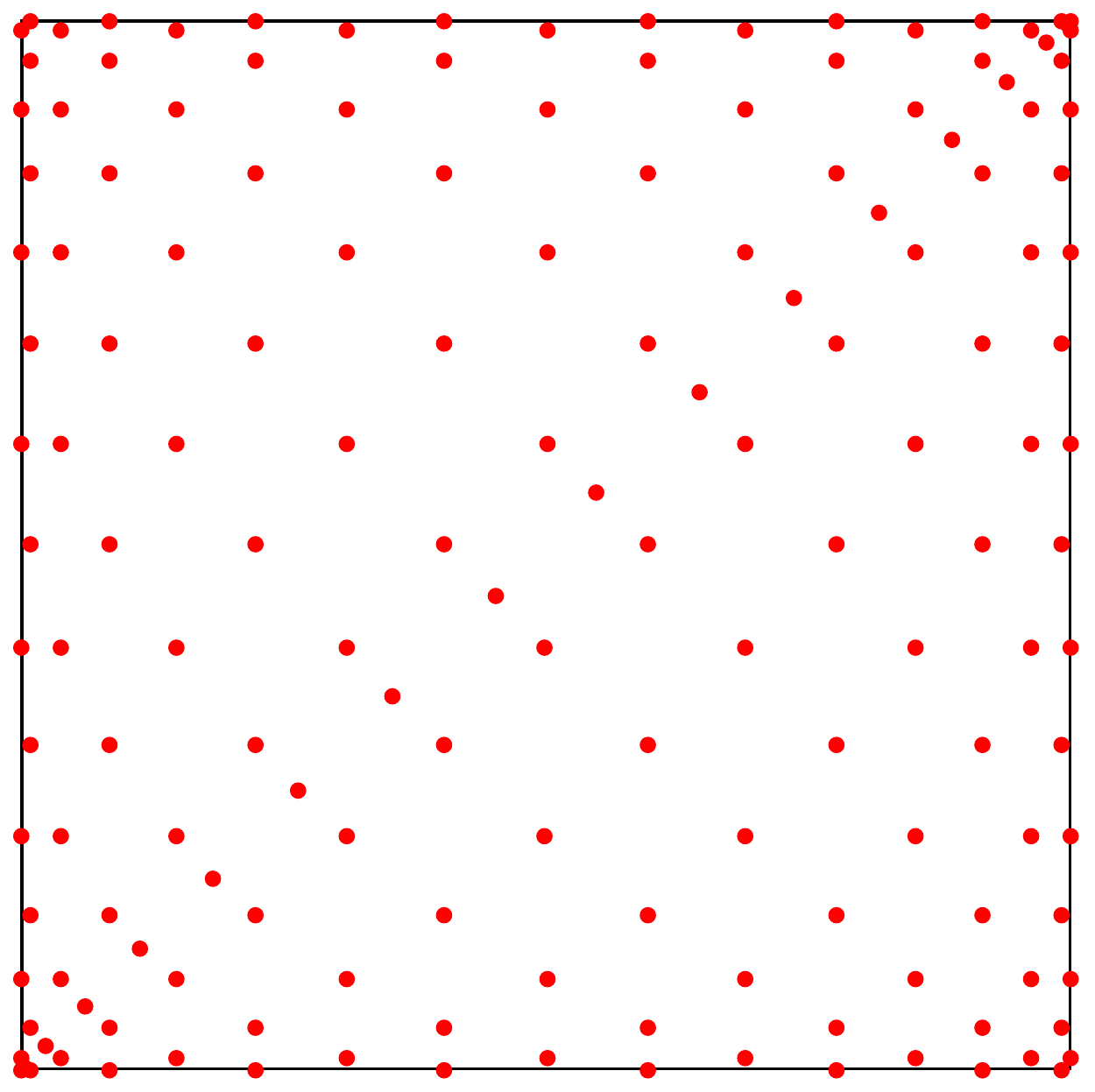} \qquad \includegraphics[scale=0.45]{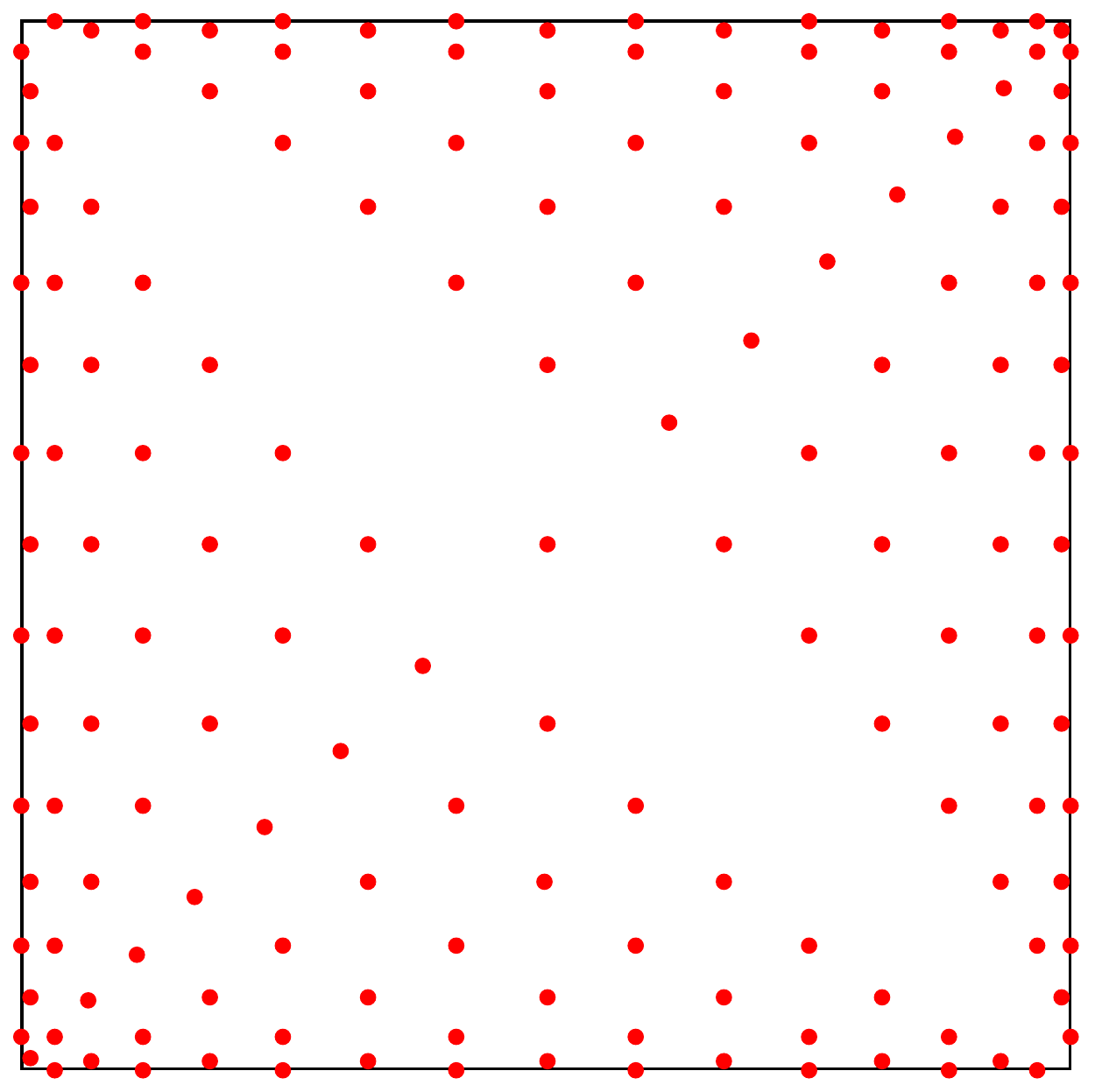}
\caption{Nodes of minimal cubature rules of degree 33 for $\CW_{-\frac12, -\frac12, - \frac12}$ and
   $\CW_{\f12,\f12,- \frac12}$ }
\end{figure} 
 
Just as in the case of $n =2m$ in \cite{X12}, the nodes appear to be propelled away from the diagonals of the
square, which is where the singularity appears in $|x-y|^{2\a+1}|x+y|^{2\b+1}$ of the weight function $\CW_{\a,\b,-\f12}$. 
We can in fact determine this region precisely. Let $\cos \t_{k,m}^{(\a,\b)}$ be the zeros of the Jacobi polynomial 
$P_m^{(\a,\b)}$ as before with the order
$
   0 < \t_{1,m}^{(\a,\b)} < \t_{2,m}^{(\a,\b)} < \cdots < \t_{m,m}^{(\a,\b)} < \pi.
$

\begin{prop}
Let $\a, \b > -1$. For $0< \t_{1,m}< \t_{2,m} < \ldots< \t_{m,m} < \pi$, define parametric curves 
\begin{align*}
    S_1^{\pm}:=& \{ \pm (\cos \t, \cos (\t-\t_{1,m})): \t_{1,m} < \t < \pi\}, \\
    S_2^{\pm} :=& \{\pm (\cos \t, \cos (\t-\t_{m,m})): 0 < \t < \t_{m,m}\}.
\end{align*}
\begin{enumerate}
\item For the cubature rule of degree $4m+1$ in \eqref{NearMin-}, let $\t_{j,m} = \t_{j,m}^{(\a+1,\b)}$; then
all nodes are in the region bounded by $S_1^{\pm}$ and $S_2^{\pm}$ and the boundary of $[-1,1]^2$ 
except those on the diagonal $x= y$. 
\item For the minimal cubature rule of degree $4m$ in \cite[(4.2)]{X12}, let $\t_{j,m} = \t_{j,m}^{(\a,\b)}$; then
all nodes are in the region bounded by $S_1^{\pm}$ and $S_2^{\pm}$  and the boundary of $[-1,1]^2$.
\end{enumerate}
\end{prop}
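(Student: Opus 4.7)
The approach is to reduce the geometry to straight lines by passing to the parametrization $(x,y) = (\cos\phi, \cos\psi)$ with $(\phi,\psi) \in [0,\pi]^2$, which is a homeomorphism onto $[-1,1]^2$. In these coordinates, writing $\phi_0 := (\t_{k,m} - \t_{j,m})/2$ and $\psi_0 := (\t_{k,m} + \t_{j,m})/2$, the four nodes $(\pm s_{j,k}, \pm t_{j,k})$ and $(\pm t_{j,k}, \pm s_{j,k})$ map respectively to
\[
(\phi_0,\psi_0),\quad (\psi_0,\phi_0),\quad (\pi-\phi_0,\pi-\psi_0),\quad (\pi-\psi_0,\pi-\phi_0),
\]
using $-\cos\t = \cos(\pi-\t)$. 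A direct computation turns the four bounding curves into straight segments: $S_2:\ \phi+\psi=\t_{m,m}$, $S_2^-:\ \phi+\psi=2\pi-\t_{m,m}$, $S_1:\ \phi-\psi=\t_{1,m}$, and $S_1^-:\ \psi-\phi=\t_{1,m}$, each restricted to the appropriate interval.

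Together with the four edges $\phi=0$, $\psi=0$, $\phi=\pi$, $\psi=\pi$ of $[0,\pi]^2$, which correspond under the cosine map to the right, top, left, and bottom edges of $[-1,1]^2$, these segments carve out four disjoint triangular sub-regions, one adjacent to each edge: for example, $\{\phi+\psi \le \t_{m,m},\ \psi-\phi \ge \t_{1,m}\}$ abuts $\phi=0$. I would then verify that each node type lies in the corresponding triangle. For $(s_{j,k},t_{j,k})=(\cos\phi_0,\cos\psi_0)$ this is immediate: $\phi_0+\psi_0=\t_{k,m}\le \t_{m,m}$ and $\psi_0-\phi_0=\t_{j,m}\ge \t_{1,m}$, with the equality cases $k=m$, $j=1$, $j=k$ placing the node exactly on $S_2$, on $S_1^-$, or on the right edge respectively. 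The three symmetric cases are identical, and pulling back by the cosine map sends each triangle to a region in $[-1,1]^2$ whose boundary consists of one arc of $S_1^\pm$, one arc of $S_2^\pm$, and one edge of the square; the union of the four images is the region named in the proposition.

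For part (1), the convention $\t_{0,m}=0$ gives extra nodes with $j=0$, for which $\phi_0=\psi_0$, i.e., points on the diagonal $x=y$; these are precisely the ones excluded from the claim. Part (2) is handled by the same argument with $\t_{j,m}^{(\a,\b)}$ replacing $\t_{j,m}^{(\a+1,\b)}$ and with $1\le j\le k\le m$, so no diagonal nodes arise. The main obstacle is purely book-keeping: confirming that the four line segments and four edges of $[0,\pi]^2$ close up into exactly the claimed four triangles, and that each triangle's boundary pieces map to the correct arcs of $S_1^\pm$ and $S_2^\pm$; once the linear picture in $[0,\pi]^2$ is set up, everything else reduces to the two inequalities $\t_{1,m}\le \t_{j,m}$ and $\t_{k,m}\le \t_{m,m}$.
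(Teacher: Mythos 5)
Your proposal is correct and follows essentially the same route as the paper: both pass to the angular coordinates in which a node becomes $(\cos\phi_0,\cos\psi_0)$ with $\phi_0=(\t_{k,m}-\t_{j,m})/2$, $\psi_0=(\t_{j,m}+\t_{k,m})/2$, observe that the curves $S_1^{\pm}$, $S_2^{\pm}$ become the lines $\phi\mp\psi=\t_{1,m}$ and $\phi+\psi=\t_{m,m}$ or $2\pi-\t_{m,m}$, and reduce the claim to the inequalities $\t_{1,m}\le\t_{j,m}$ and $\t_{k,m}\le\t_{m,m}$ (the paper phrases this as identifying the sides of the triangle of argument pairs, which is the same computation). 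Your explicit inequality check and the $j=0$ diagonal remark are exactly the book-keeping the paper leaves to ``the other three subsets can be considered similarly.''
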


\begin{proof}
Let $\t_{k,m} = \t_{k,m}^{(\a,\b)}$ and define $\xi_{j,k} = (\t_{j,m} + \t_{k,m})/2$, $\eta_{j,k} = (\t_{j,m} - \t_{k,m})/2$. 
Then $t_{j,k} = \cos \xi_{j,k}$ and $s_{j,k} = \cos \eta_{j,k}$. The arguments of the nodes in $X_m^{(1)}: = 
\{(t_{j,k},s_{j,k}), 1 \le j \le k \le m\}$ consist of the set $\{(\xi_{j,k},\eta_{j,k}): 1 \le j \le k \le m\}$, which is roughly
a triangle with $(\xi_{m,m}, \eta_{m,m}), \ldots, (\xi_{2,2}, \eta_{2,2}), (\xi_{1,1}, \eta_{1,1})$ as one side
and $(\xi_{1,m}, \eta_{1,m})$ as the vertex opposite to that side. The two other sides of the triangle are  
$(\xi_{j,m},\eta_{j,m})$, $1 \le j \le m$, and $(\xi_{1,k},\eta_{1,k})$, $1 \le k \le m$, respectively. Since 
\begin{align*}
  \xi_{j,m} & = (\t_{j,m}+\t_{m,m})/2 = (\t_{j,m} - \t_{m,m})/2+ \t_{m,m} = \eta_{j,m} + \t_{m,m},\\
  \xi_{1,k} & = (\t_{1,m}+\t_{k,m})/2 = - (\t_{1,m} - \t_{k,m})/2+ \t_{1,m} = -\eta_{1,k} + \t_{1,m},
\end{align*}
we see that $X_m^{(1)}$ is bounded by the parametric curve $(\cos \t, \cos (\t - \t_{m,m}))$ and 
$(\cos \t, \cos (\t- \t_{1,m}))$, or the curves $S_1^+$ and $S_2^+$. The other three subsets of the nodes can be 
considered similarly. 
\end{proof}

As an illustration, we depict the curves for the nodes of the two cubature rules in the above proposition in
Figure 3 for the case $\a=\b=1/2$, or the weight function
$$
  W_{\f12,\f12,-\f12}(x,y) = \frac{(x-y)^2(x+y)^2}{\sqrt{1-x^2}\sqrt{1-y^2}},
$$ 
\begin{figure} [ht] 
\includegraphics[scale=0.45]{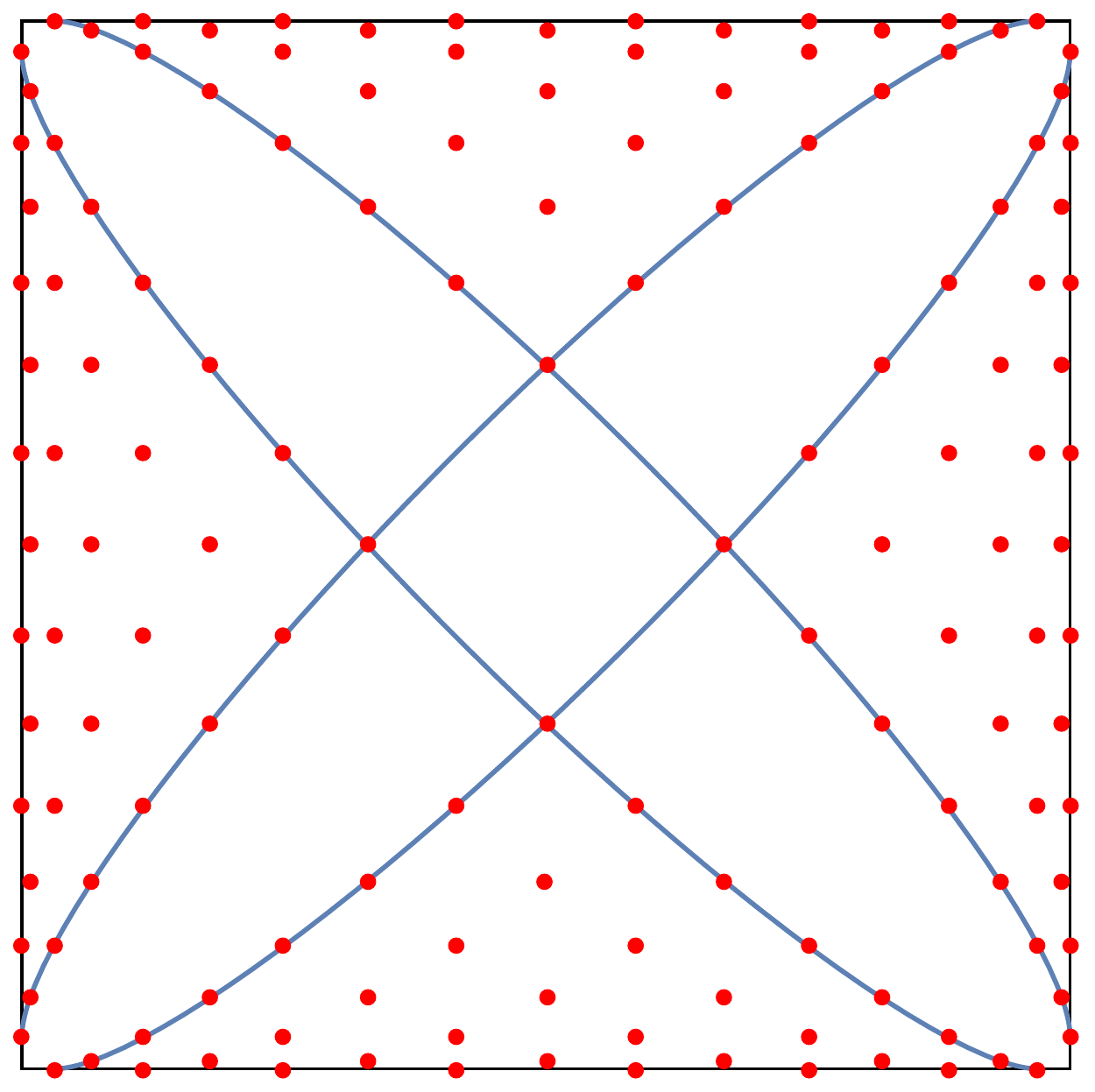} \qquad \includegraphics[scale=0.45]{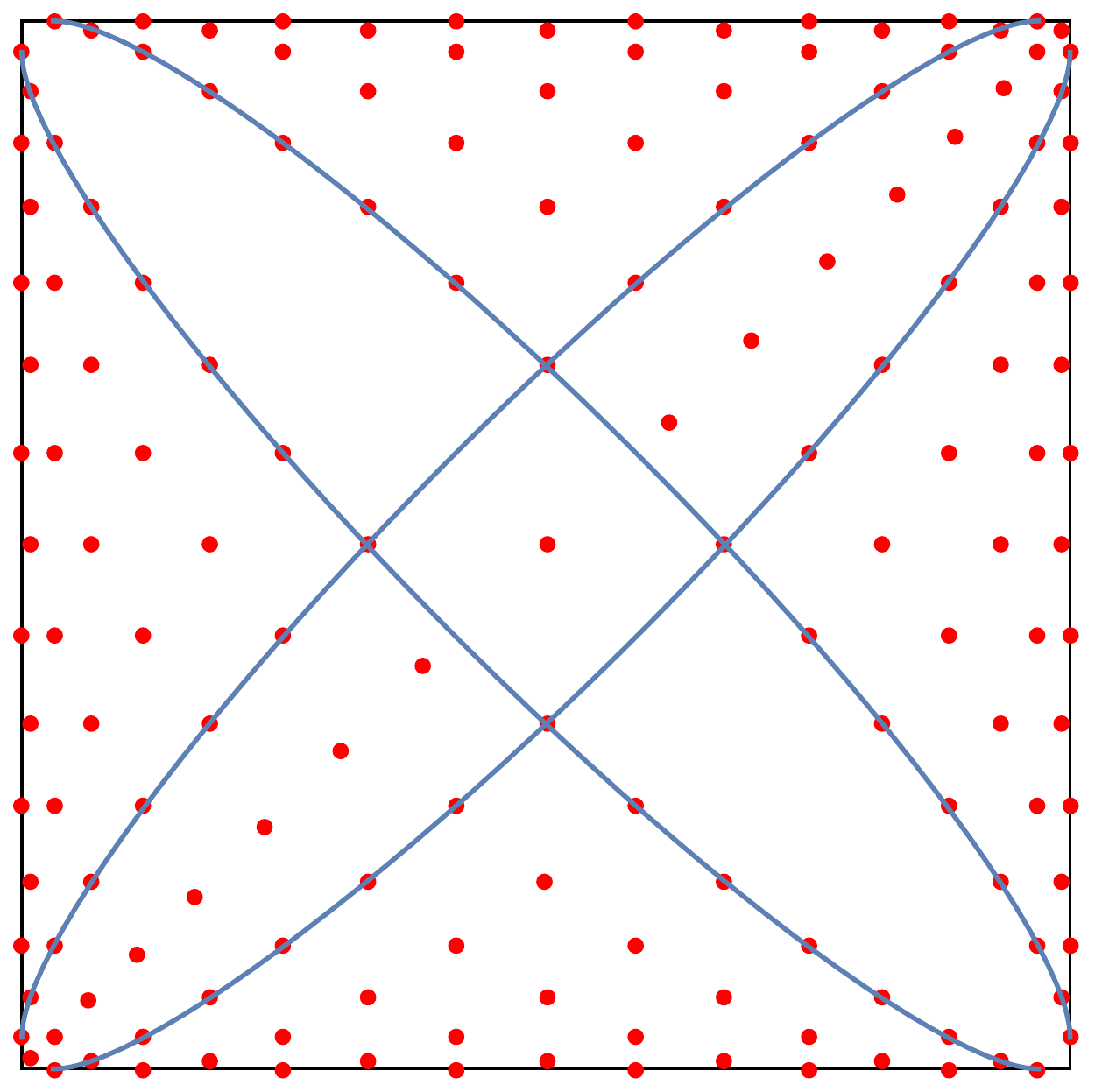}
\caption{Nodes of minimal cubature rule of degree 35 and near minimal rule of degree 33
for $\CW_{\f12,\f12,-\frac12}$} 
\end{figure} 

\noindent
and in Figure 4 for the case $\a = 1/2$ and $\b =-1/2$, or the weight function
$$
  W_{\f12,\f12,-\f12}(x,y) = \frac{(x-y)^2}{\sqrt{1-x^2}\sqrt{1-y^2}}.
$$ 
\begin{figure} [ht] 
\includegraphics[scale=0.45]{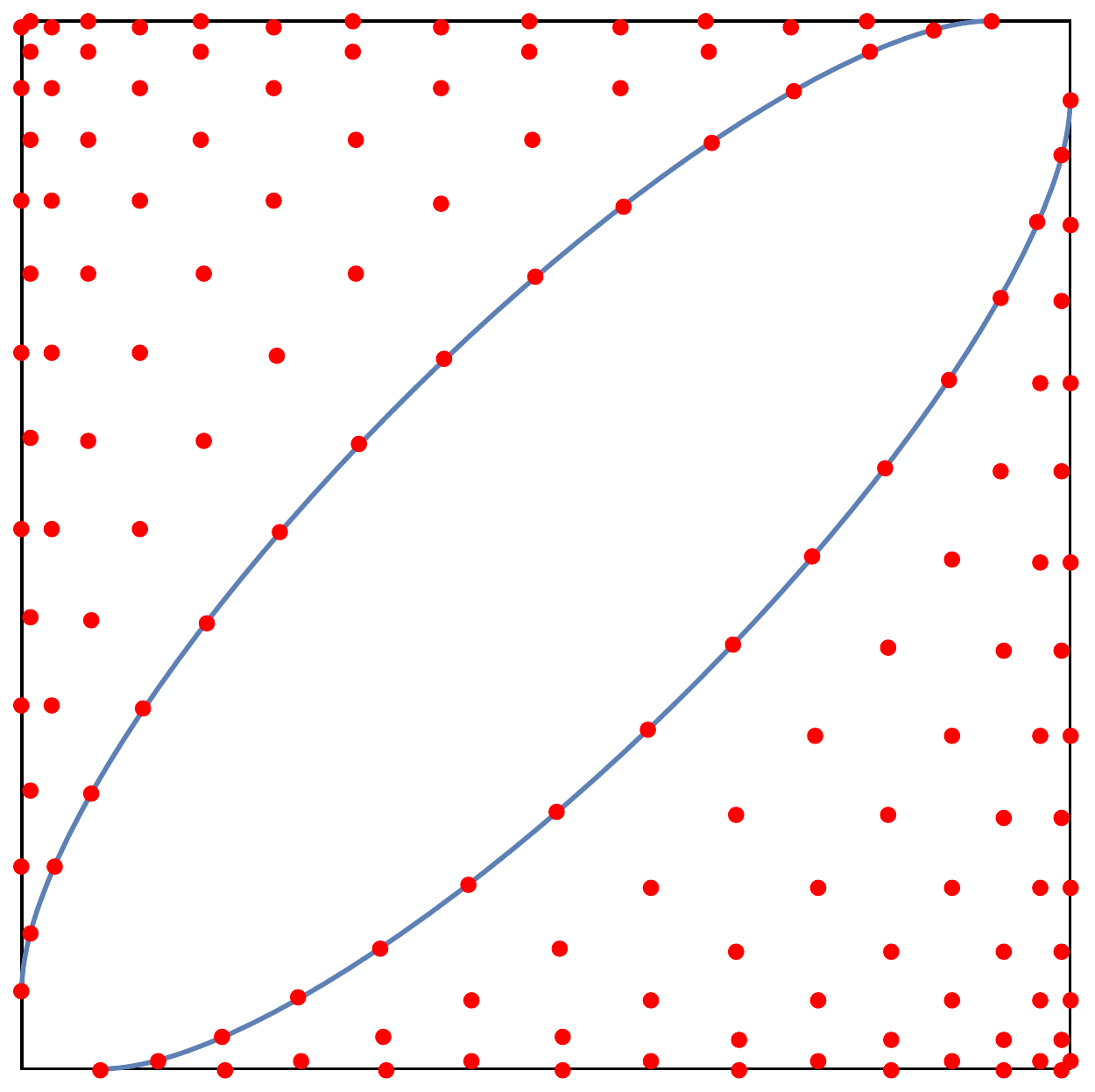} \qquad \includegraphics[scale=0.45]{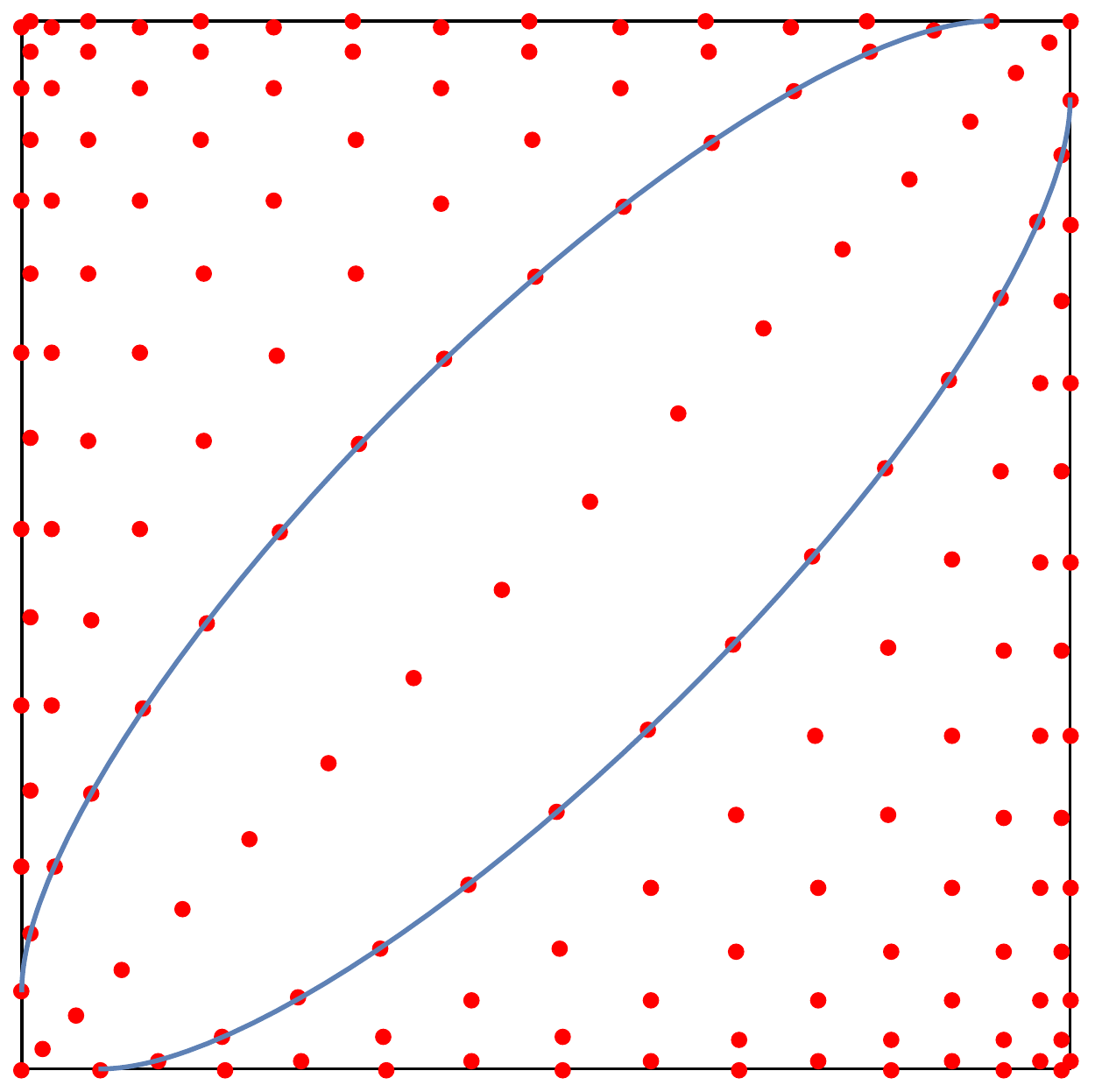}
\caption{Nodes of minimal cubature rule of degree 35 and near minimal rule of degree 33
for $\CW_{\f12,-\f12,-\frac12}$} 
\end{figure} 

If we increase the values of $\a,\b$, then the region around the diagonals that does not contain nodes increase in
size. On the other hand, it is known that $\t_{1,m}^{(\a,\b)} \to 0$ and $\t_{m,m}^{(\a,\b)} \to \pi$ when $m \to \infty$, 
so that the four parametric curves tend to the two diagonals $x-y=0$ and $x+y=0$ of $[-1,1]^2$. In other words, 
the holes in the figures will close down as $m$ increases. 
 
\section{Lagrange interpolation and near minimal cubature rules}
\setcounter{equation}{0}

The near minimal cubature rule in Theorem \ref{thm:cubaCW} can also be obtained by integrating the
Lagrange interpolation polynomial based on its nodes, as we stated in Theorem \ref{near-m-cuba+interp}. 
The interpolation polynomial is unique in the space $\Pi_n^*$ and its explicit formula can be obtained if
we can determine $b_{k,n}$ in \eqref{Kn*}. In this section we consider the case of $\CW_{\a,\b, - \frac12}$.
Throughout this section we write $\CW_{\a,\b}: = \CW_{\a,\b, -\frac12}$ and suppress $-\f12$ in the subscript
and superscript whenever we can; for example, we write ${}_iQ_{k,n}^{\a,\b}$ for ${}_iQ_{k,n}^{\a,\b,-\f12}$. 

\subsection{Construction of the interpolation polynomial}

We need a lemma on the Jacobi polynomials.

\begin{lem}\label{hat-hn}
For $\a,\b > -1$ and $m \ge 0$, let $x_{k,m} = x_{k,m}^{(\a+1,\b)}$ and let $\mu_{k,m}^{(\a,\b)}$ be the 
coefficients in the Gauss-Radau quadrature \eqref{Gauss-Radau}. Define
$$
  \wh h_{\ell}^{(\a,\b)}:= \sum_{k=0}^m \mu_k^{(\a,\b)} (1+x_{k,m})^2 \left[p_\ell^{(\a,\b+1)}(x_{k,m})\right]^2.
$$
Then $\wh h_{\ell}^{(\a,\b)} =  2(\b+1)/(\a+\b+2)$ for $0\le \ell \le m-1$ and 
$$
 \wh h_m^{(\a,\b)} = \frac{(\b+1)(\a+\b+m+1)(\a+\b+2m+2)}{(\a+\b+2)(\b+m+1)(\a+\b+2m+1)}.
$$
\end{lem}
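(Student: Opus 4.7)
The plan is to split into two cases according to whether the Gauss-Radau rule \eqref{Gauss-Radau} is exact on the summand. For $0 \le \ell \le m-1$, the polynomial $(1+x)^2 [p_\ell^{(\a,\b+1)}(x)]^2$ has degree $2\ell + 2 \le 2m$, so the Gauss-Radau rule is exact and
$$
   \wh h_\ell^{(\a,\b)} = \int_{-1}^1 (1+x)^2 \bigl[p_\ell^{(\a,\b+1)}(x)\bigr]^2 w_{\a,\b}^*(x)\, dx.
$$
I would then apply the classical Jacobi identity
$(1+t) P_\ell^{(\a,\b+1)}(t) = c_1\, P_{\ell+1}^{(\a,\b)}(t) + c_2\, P_\ell^{(\a,\b)}(t)$
with $c_1 = 2(\ell+1)/(2\ell+\a+\b+2)$ and $c_2 = 2(\ell+\b+1)/(2\ell+\a+\b+2)$, convert to the orthonormal basis $\{p_k^{(\a,\b)}\}$, square, and invoke orthonormality with respect to $w_{\a,\b}^*$. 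The cross term drops out and the value is expressed as a sum of two squared coefficients weighted by ratios of Jacobi $L^2$-norms; a gamma-function calculation should collapse this sum to the stated $\ell$-independent value $2(\b+1)/(\a+\b+2) = c_{\a,\b}/c_{\a,\b+1}$.

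For the boundary case $\ell = m$ the integrand has degree $2m+2$ and the Gauss-Radau rule is no longer exact, so a different idea is needed. The key observation is that at the nodes $x_{k,m} = x_{k,m}^{(\a+1,\b)}$ for $k\ge 1$ we have $P_m^{(\a+1,\b)}(x_{k,m}) = 0$ by construction, while at the fixed node $x_{0,m} = 1$ the factor $(1-x_{0,m})$ vanishes. Applying the companion identity
$(1-t) P_m^{(\a+1,\b)}(t) = -\tfrac{2(m+1)}{2m+\a+\b+2} P_{m+1}^{(\a,\b)}(t) + \tfrac{2(m+\a+1)}{2m+\a+\b+2} P_m^{(\a,\b)}(t)$
at these nodes yields
$$
P_{m+1}^{(\a,\b)}(x_{k,m}) = \frac{m+\a+1}{m+1}\, P_m^{(\a,\b)}(x_{k,m}), \qquad 0 \le k \le m.
$$
Substituting this into the first Jacobi identity above, the two coefficients on the right collapse and one obtains the clean pointwise relation $(1+x_{k,m}) P_m^{(\a,\b+1)}(x_{k,m}) = 2\, P_m^{(\a,\b)}(x_{k,m})$. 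Squaring, the $k$-th summand in the definition of $\wh h_m^{(\a,\b)}$ becomes a constant multiple of $\mu_{k,m}^{(\a,\b)} \bigl[p_m^{(\a,\b)}(x_{k,m})\bigr]^2$, whose polynomial part has degree $2m$, so the Gauss-Radau rule is exact on this factor and orthonormality of $p_m^{(\a,\b)}$ with respect to $w_{\a,\b}^*$ forces that sum to $1$. What is left is the ratio $\|P_m^{(\a,\b)}\|_{w_{\a,\b}^*}^2 / \|P_m^{(\a,\b+1)}\|_{w_{\a,\b+1}^*}^2$, which unfolds via standard gamma-function manipulations to the claimed closed form.

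The main obstacle is the constant-tracking in the $\ell \le m-1$ case: showing that the apparent $\ell$-dependence in the two squared-coefficient-times-norm-ratio terms cancels completely to produce a constant independent of $\ell$ is the substantive content, and it requires careful bookkeeping of the normalization constants relating standard Jacobi $P_\ell^{(\a,\b)}$ to orthonormal $p_\ell^{(\a,\b)}$ under the parameter shift $\b \mapsto \b+1$. Once the pointwise identity $(1+x_{k,m}) P_m^{(\a,\b+1)}(x_{k,m}) = 2 P_m^{(\a,\b)}(x_{k,m})$ is in hand, the $\ell = m$ case is conceptually cleaner and reduces to ratio-of-norms bookkeeping.
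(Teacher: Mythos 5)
Your overall architecture --- exactness of the Gauss--Radau rule for $\ell\le m-1$, and the pointwise identity $(1+x_{k,m})P_m^{(\a,\b+1)}(x_{k,m})=2P_m^{(\a,\b)}(x_{k,m})$ at the nodes for $\ell=m$ --- is exactly the paper's, and your derivation of that identity from the pair (22.7.15)--(22.7.16) is equivalent to the paper's direct appeal to \eqref{eq:Jacobi1}. The gap is in the calculation you defer for $\ell\le m-1$: it does not close. Writing $(1+t)p_\ell^{(\a,\b+1)}=d_1p_{\ell+1}^{(\a,\b)}+d_2p_\ell^{(\a,\b)}$ and squaring gives $d_1^2+d_2^2$, and this quantity is genuinely $\ell$-dependent: for $\a=\b=0$ one finds $d_1^2+d_2^2=4(\ell+1)^2/\bigl((2\ell+1)(2\ell+3)\bigr)$, which is $4/3$ at $\ell=0$ and $16/15$ at $\ell=1$, never the claimed value $1$. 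Equivalently, $\int_{-1}^1(1+x)^2[p_\ell^{(\a,\b+1)}(x)]^2w^*_{\a,\b}(x)\,dx=(c_{\a,\b}/c_{\a,\b+1})\bigl(1+\int_{-1}^1 x[p_\ell^{(\a,\b+1)}(x)]^2w^*_{\a,\b+1}(x)\,dx\bigr)$, and the second integral is the nonzero, $\ell$-dependent diagonal recurrence coefficient of $p_\ell^{(\a,\b+1)}$. No amount of gamma-function bookkeeping will turn this into an $\ell$-independent constant.

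What this reveals is that the exponent in the definition of $\wh h_\ell$ must be $1$, not $2$: that is what the application in Theorem \ref{interp-CL*} requires (each factor $s_{j,k}+t_{j,k}=\sqrt{1+x_{j,m}}\sqrt{1+x_{k,m}}$ contributes a single power of $1+x_{k,m}$ to each one-dimensional sum), and it is what the paper's own proof tacitly uses when it writes ``$(1+x)^2w_{\a,\b}=w_{\a,\b+1}$'', an identity that holds only for the first power. With the first power your connection-coefficient decomposition is unnecessary: the case $\ell\le m-1$ is a one-line consequence of $(1+x)w^*_{\a,\b}=(c_{\a,\b}/c_{\a,\b+1})w^*_{\a,\b+1}$ and orthonormality of $p_\ell^{(\a,\b+1)}$. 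For $\ell=m$ your key identity is the right one, but the constant must be tracked more carefully than ``a constant multiple'': squaring the node identity gives $(1+x_{k,m})^2[p_m^{(\a,\b+1)}(x_{k,m})]^2=4\,(h_m^{(\a,\b)}/h_m^{(\a,\b+1)})[p_m^{(\a,\b)}(x_{k,m})]^2$, so your route produces $4h_m^{(\a,\b)}/h_m^{(\a,\b+1)}$ rather than the bare norm ratio, while with the first-power definition only one copy of $p_m^{(\a,\b+1)}$ can be converted and one is left with $\int P_m^{(\a,\b)}p_m^{(\a,\b+1)}w^*_{\a,\b}$, a leading-coefficient computation rather than a pure norm ratio. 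In short: right skeleton, same key lemma as the paper, but the two deferred computations are where the substance lies and, as set up, they do not yield the stated formulas; a numerical sanity check at $\a=\b=0$, $m=1$ would have exposed this immediately.
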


\begin{proof}
If $0 \le \ell \le m-1$, then $(1+x)^2 [p_\ell^{(\a,\b+1)}(x)]^2$ is of degree $2\ell + 2 \le 2m$, so that 
$\wh h_\ell$ is equal to the integral of this polynomial with respect to $w_{\a,\b}$ by the Gauss-Radau
quadrature. Consequently, since $(1+x)^2 w_{\a,\b}(x) = w_{\a,\b+1}(x)$, we see that 
$ \wh h_\ell = c_{\a,\b} /c_{\a,\b+1} = 2(\b+1)/(\a+\b+2)$. The same argument does not work when
$\ell =m$, since the polynomial is of degree $2m+2$. However, by the relation \eqref{eq:Jacobi1},
$(1+x_{k,m})P_m^{(\a,\b+1)}(x_{k,m}) = 2 P_m^{(\a,\b)}(x_{k,m})$ so that we can apply the Gauss-Radau 
quadrature on $P_m^{(\a,\b)}$ to compute $\wh h_m$. Using $p_m^{(\a,\b)}(x) = [h_m^{(\a,\b)}]^{-\f12} 
P_m^{(\a,\b)}(x)$, where $h_m^{(\a,\b)}$ denotes the $L^2(w_{\a,\b})$ norm of $P_m^{(\a,\b)}$ normalized 
as $h_m^{(\a,\b)} = \int_{-1}^1 |P_m^{(\a,\b)}(x)|^2 w_{\a,\b}^*(x)dx$, we conclude that 
$$
\wh  h_m^{(\a,\b)} = \frac{h_m^{(\a,\b)}}{h_m^{(\a,\b+1)}}  \int_{-1}^1 \left[p_m^{(\a,\b)}(x)\right]^2 w_{\a,\b}^*(x) dx =  
\frac{h_m^{(\a,\b)}}{h_m^{(\a,\b+1)}},
$$
from which the stated formula for $\wh h_m^{(\a,\b)}$ follows from that of $h_m^{(\a,\b)}$ (cf. \cite[(4.33)]{Sz}).
\end{proof}

We now give the explicit formula of the interpolation polynomial based on the nodes of the near minimal
cubature rule. Let $\t_{k,m} = \t_{k,m}^{(\a+1,\b)}$ and let $s_{j,k}$ and $t_{j,k}$ be defined as in
\eqref{stjk}. Set 
\begin{align*}
    \xb_{j,k}^{(1)}:=(s_{j,k},t_{j,k}), \,\,  \xb_{j,k}^{(2)} :=(t_{j,k},s_{j,k}), \,\, 
      \xb_{j,k}^{(3)}:=(-s_{j,k},- t_{j,k}), \,\, \xb_{j,k}^{(4)}:= (-t_{j,k},-s_{j,k}). 
\end{align*}
Then the nodes of the near cubature formula for $\CW_{\a,\b}$ in \eqref{NearMin-} consist of 
$$
  X_m: = \{\xb_{j,k}^{(i)}: 0\le j \le k \le m, \,\, i=1,2,3,4\}. 
$$ 
The subspace $\Pi_{2m+1}^*$ in which the interpolation polynomial is unique takes the form
$$ 
 \Pi_{2m+1}^* : = \Pi_{2m}^2 \cup \mathrm{span} \{{}_1Q_{k,2m+1}^{\a,\b+1}: 0 \le k \le n-1\}
$$ 
and the kernel $K_n^*$, in \eqref{Kn*}, used to determine the interpolation polynomials is 
\begin{equation} \label{wtCKn}
  \CK_{2m+1}^* (x,y) : = \CK_{2m}^{\a,\b}(x,y) + 
           \sum_{k=0}^{m} b_{k,m} {}_1Q_{k,2m+1}^{\a,\b+1}(x) {}_1Q_{k,2m+1}^{\a,\b+1}(y),
\end{equation}
where $b_{k,m}$ are certain positive numbers,  ${}_1Q_{k,2m+1}^{\a,\b+1}$ are defined in 
Proposition \ref{prop:OP}, and $\CK_{2m}^{\a,\b}$ is the reproducing kernel of $\Pi_{2m}^2$ in 
$L^2(\CW_{\a,\b})$, given explicitly in \cite[(4.23)]{X12a}. The constants $b_{k,m}$ and the interpolation 
polynomial $\CL_n^{\a,\b}$ are explicitly determined in the following theorem. 

\begin{thm} \label{interp-CL*}
For $n =2m+1$, the Lagrange interpolation polynomial, $\CL_n^{\a,\b} f$, in $\Pi_{2m+1}^*$ that interpolates 
$f$ on $X_m$ is given by 
\begin{align} \label{CLnWab}
   \CL_n^{\a,\b} f(x,y) = \sum_{k=0}^m   \sum_{j=0}^k & \left[ f\left(\xb_{j,k}^{(1)}\right) \ell^{(1)}_{j,k} (x,y)
         + f\left(\xb_{j,k}^{(2)}\right) \ell^{(2)}_{j,k} (x,y) \right. \\
         & \left. + f\left(\xb_{j,k}^{(3)}\right) \ell^{(3)}_{j,k} (x,y)+f\left(\xb_{j,k}^{(4)}\right) \ell^{(4)}_{j,k} (x,y) \right], \, \notag
\end{align}
where the fundamental interpolation polynomials $\ell^{(i)}_{j,k}$ are given by
\begin{align} \label{ell-ijk}
  \ell^{(i)}_{j,k} (x,y) = \frac12 \mu_{j,m}^{(\a,\b)}\mu_{k,m}^{(\a,\b)} \CK_{2m+1}^*\left((x,y), \xb_{j,k}^{(i)}\right),
\end{align}
in which $\frac12$ in the right hand side needs to be replaced by $\frac14$ when $j = k$, and  
\begin{align} \label{eq:CK*Jacobi}
    \CK_{2m+1}^* (x,y) & =     \CK_{2m}^{\a,\b}(x,y) \, + \frac{1}{2 \wh h_0^{(\a,\b)} \wh h_m^{(\a,\b)}} (x_1+x_2)(y_1+y_2)   \\
      & \times  \Big[ K_{m}^{\a,\b+1} (X, Y) - K_{m-1}^{\a,\b+1} (X,Y)    
          \notag \\
      & \left. + \left( \frac{\wh h_0^{(\a,\b)}}{\wh h_m^{(\a,\b)}} - 1\right)
        P_{m,m}^{\a,\b+1,-\f12}(X)P_{m,m}^{\a,\b+1,-\f12}(Y)\right], \notag
\end{align}
where $X= (2 x_1 x_2,x_1^2+x_2^2-1)$, $Y= (2 y_1 y_2, y_1^2+y_2^2 -1)$, $\CK_{2m}^{\a,\b}(\cdot,\cdot)$ is
given in \cite[(4.23)]{X12a} with $\g = -1/2$ and $K_m^{\a,\b}(\cdot,\cdot)$ is given in \cite[(4.24)]{X12a}.
\end{thm}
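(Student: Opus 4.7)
The plan is to derive the explicit form \eqref{eq:CK*Jacobi} of $\CK_{2m+1}^*$ and then invoke Theorem \ref{near-m-cuba+interp} to obtain \eqref{CLnWab} and \eqref{ell-ijk}. Matching the weights of the cubature \eqref{NearMin-} against the $\l_{k,n}$ of Theorem \ref{near-m-cuba+interp} gives $\l_{j,k}^{(i)} = \tfrac12 \mu_{j,m}^{(\a,\b)}\mu_{k,m}^{(\a,\b)}$ (or $\tfrac14 (\mu_{j,m}^{(\a,\b)})^2$ when $j=k$), from which \eqref{ell-ijk} is immediate from part (b) of that theorem. So the real task is the identification of the $m+1$ unknown positive numbers $b_{k,m}$ in \eqref{wtCKn}.

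By Theorem \ref{near-m-cuba+interp}(a), these $b_{k,m}$ are characterized uniquely by the requirement that $\CK_{2m+1}^*(\xb,\xb) = \l_\xb^{-1}$ at every node $\xb \in X_m$. The factorization from Proposition \ref{prop:OP} pulls out a common factor $(x_1+x_2)(y_1+y_2)$ from the added sum $\sum_k b_{k,m}\,{}_1Q_{k,2m+1}^{\a,\b+1}(x)\,{}_1Q_{k,2m+1}^{\a,\b+1}(y)$, reducing the problem to identifying the coefficients of a sum of terms $P_{k,m}^{\a,\b+1,-\f12}(X)\,P_{k,m}^{\a,\b+1,-\f12}(Y)$ with $X=(2x_1x_2,\, x_1^2+x_2^2-1)$ and $Y=(2y_1y_2,\, y_1^2+y_2^2-1)$. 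I would guess that these rescaled coefficients have the form $1/(2\wh h_k^{(\a,\b)})$ with the discrete $L^2$-quantity $\wh h_k^{(\a,\b)}$ from Lemma \ref{hat-hn}; since $\wh h_k^{(\a,\b)}$ is constant in $k$ for $0\le k\le m-1$ and differs only at $k=m$, the sum then telescopes into $K_{m}^{\a,\b+1}(X,Y)-K_{m-1}^{\a,\b+1}(X,Y)$ plus a single rank-one correction at $k=m$ that absorbs the discrepancy $\wh h_0^{(\a,\b)}/\wh h_m^{(\a,\b)}-1$. This reproduces exactly the bracketed expression in \eqref{eq:CK*Jacobi}.

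The last, and hardest, step is to verify this guess by checking the weight identity at a representative node $\xb_{j,k}^{(1)}=(s_{j,k},t_{j,k})$. The substitution $s_{j,k}+t_{j,k}=2\cos(\t_{j,m}/2)\cos(\t_{k,m}/2)$ together with $1+\cos\t=2\cos^2(\t/2)$ converts $(s_{j,k}+t_{j,k})^2$ into $(1+\cos\t_{j,m})(1+\cos\t_{k,m})$, which is precisely the weight appearing in the definition of $\wh h_\ell^{(\a,\b)}$. Combined with the Radau-weight formula $\mu_{k,m}^{(\a,\b)}=\tfrac{2(\a+1)}{\a+\b+2}\,\l_{k,m}^{(\a+1,\b)}/(1-\cos\t_{k,m})$ and the standard Christoffel-function identity for $\{p_\ell^{(\a+1,\b)}\}$ at its own zeros, the sum added in \eqref{eq:CK*Jacobi} accounts precisely for the gap between $\CK_{2m}^{\a,\b}(\xb_{j,k}^{(1)},\xb_{j,k}^{(1)})$---computed from \cite[(4.23)]{X12a}---and the prescribed value $2/(\mu_{j,m}^{(\a,\b)}\mu_{k,m}^{(\a,\b)})$. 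The main difficulty will be the bookkeeping of the two subcases $\ell<m$ and $\ell=m$ coming from Lemma \ref{hat-hn}, and aligning the explicit $\CK_{2m}^{\a,\b}$ kernel of \cite{X12a} against the Radau-based identity; the choice of $b_{k,m}$ above is exactly what brings the two sides into agreement.
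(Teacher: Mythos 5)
Your setup is right: \eqref{CLnWab} and \eqref{ell-ijk} do follow from Theorem \ref{near-m-cuba+interp} once the cubature weights $\tfrac12\mu_{j,m}^{(\a,\b)}\mu_{k,m}^{(\a,\b)}$ are read off from \eqref{NearMin-}, and your guessed coefficients $b_{\ell,m}\propto(2\wh h_\ell^{(\a,\b)}\wh h_m^{(\a,\b)})^{-1}$, together with the telescoping of the constant-in-$\ell$ part into $K_m^{\a,\b+1}-K_{m-1}^{\a,\b+1}$ plus a rank-one correction at $\ell=m$, is exactly the answer. But the verification step --- the entire content of the theorem --- is left as an assertion. Checking the diagonal identity $\CK_{2m+1}^*(\xb,\xb)=2/(\mu_{j,m}^{(\a,\b)}\mu_{k,m}^{(\a,\b)})$ at every node requires the explicit value of the two-variable reproducing kernel $\CK_{2m}^{\a,\b}(\xb_{j,k}^{(1)},\xb_{j,k}^{(1)})$ from \cite[(4.23)]{X12a}, which is a sum over all degrees up to $2m$ of products of the ${}_iQ$'s; this does not reduce to ``the standard Christoffel-function identity for $\{p_\ell^{(\a+1,\b)}\}$ at its own zeros'' in the way you suggest, and no mechanism is given for closing that computation. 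As written, the hardest step is a plausibility claim.

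The paper avoids the diagonal computation entirely by an orthogonality (moment-matching) argument. Writing $\C_n[\cdot]$ for the cubature functional and $Q_{\ell,2m+1}={}_1Q_{\ell,2m+1}^{\a,\b+1}$, one evaluates $\C_n\bigl[\CK_{2m+1}^*(\xb_{j,k}^{(1)},\cdot)\,Q_{\ell,2m+1}\bigr]$ in two ways. On one hand, the fundamental-polynomial property gives $\CK_{2m+1}^*(\xb_{j,k}^{(1)},\xb_{j',k'}^{(1)})=2(\mu_{j,m}^{(\a,\b)}\mu_{k,m}^{(\a,\b)})^{-1}\delta_{j,j'}\delta_{k,k'}$, so the functional equals $Q_{\ell,2m+1}(\xb_{j,k}^{(1)})$. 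On the other hand, since the cubature has degree $4m+1$ and $\CK_{2m}^{\a,\b}(\xb_{j,k}^{(1)},\cdot)$ has degree $2m$, orthogonality kills that part and leaves $\sum_q b_{q,m}\,\C_n[Q_{q,2m+1}Q_{\ell,2m+1}]$; the discrete orthogonality $\C_n[Q_{q,2m+1}Q_{\ell,2m+1}]=2[a_{\a,\b,-1/2}^{(0,1)}]^2\,\wh h_\ell\,\wh h_m\,\delta_{q,\ell}$ then follows from the Gauss--Radau rule via the factor $s_{j,k}+t_{j,k}=\sqrt{1+x_{j,m}}\sqrt{1+x_{k,m}}$ and Lemma \ref{hat-hn}. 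Comparing the two evaluations yields $[a_{\a,\b,-1/2}^{(0,1)}]^2 b_{\ell,m}=(2\wh h_\ell\wh h_m)^{-1}$ with no guesswork and no appeal to the explicit form of $\CK_{2m}^{\a,\b}$ on the diagonal. If you want to salvage your route, you would need to replace the asserted ``accounts precisely for the gap'' by this kind of testing against the $Q_{\ell,2m+1}$, which is where Lemma \ref{hat-hn} actually enters.
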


\begin{proof}
That the interpolation polynomial is given by \eqref{CLnWab} and \eqref{ell-ijk} follows from 
Theorem \ref{near-m-cuba+interp}. What remains to be done is to determine the constants $b_{k,m}$ 
in \eqref{wtCKn}, which will verify the validity of \eqref{eq:CK*Jacobi}. 

We denote by $\C_m[f]$ the near minimal cubature rule, that is,  
$$
   \C_m [f]:= \frac12 \sum_{k=0}^m \mathop{ {\sum}' }_{j=0}^k  
     \mu_{k,m}^{(\a,\b)} \mu_{j,m}^{(\a,\b)}  \left[  f\left(\xb_{j,k}^{(1)}\right)+   f\left(\xb_{j,k}^{(2)}\right) 
          +  f\left(\xb_{j,k}^{(3)}\right)+   f\left(\xb_{j,k}^{(4)} \right) \right].
$$           
Throughout this proof, we write $Q_{k,2m+1}(x,y) = {}_1Q_{k,2m+1}^{\a,\b+1}(x,y)$. By the symmetry of
$Q_{k,2m+1}$, seen fom the explicit formula of $Q_{k,2m+1}$ in Proposition \ref{prop:OP}, we have 
\begin{equation} \label{Qxjk2}
    Q_{\ell,2m+1}\left(\xb_{j,k}^{(1)}\right) =  Q_{\ell,2m+1}\left(\xb_{j,k}^{(2)}\right) 
       =  -Q_{\ell,2m+1}\left(\xb_{j,k}^{(3)}\right) = -Q_{\ell,2m+1}\left(\xb_{j,k}^{(4)}\right). 
\end{equation}
Since $\ell_{j,k}^{(i)}$ are the fundamental interpolation polynomials, we obtain, by \eqref{mcfWeight}, 
\begin{equation*}
     \CK_{2m+1}^*\left(\xb_{j,k}^{(1)}, \xb_{j',k'}^{(1)}\right) = 2 \left(\mu_{j,m}^{(\a,\b)}\mu_{k,m}^{(\a,\b)}\right)^{-1} 
            \delta_{j,j'}\delta_{k,k'}. 
\end{equation*}
The above two displayed equations allow us to show that 
\begin{equation} \label{eqn-bmn}
     \C_n \left[\CK_{2m+1}^*\left(\xb_{j,k}^{(1)}, \cdot\right) Q_{\ell, 2m+1}\right] =  Q_{\ell, 2m+1}\left(\xb_{j,k}^{(1)}\right).
\end{equation}

We now compute the same quantity in another way. Using \eqref{wtCKn} and the fact that the cubature rule is 
of degree $4m+1$, it follows by the cubature rule and the orthogonality of $Q_{\ell,2m+1}$ that 
\begin{align*}
  \C_n \left[\CK_{2m+1}^*\left(\xb_{j,k}^{(1)}, \cdot\right) Q_{\ell, 2m+1}\right] 
        =  \sum_{q=0}^{m} b_{q,m}  \C_n \left[ Q_{q,2m+1} Q_{\ell, 2m+1}\right]
\end{align*}
Using the symmetry of $Q_{\ell,2m+1}$, it follows from \eqref{Qxjk2} that
\begin{align*}
   \C_n \left[ Q_{q,2m+1} Q_{\ell, 2m+1}\right]  = \sum_{k=0}^m \sum_{j=0}^m \mu_{k,m}^{(\a,\b)}\mu_{j,m}^{(\a,\b)}
       Q_{q,2m+1}\left(\xb_{j,k}^{(1)}\right) Q_{\ell,2m+1}\left(\xb_{j,k}^{(1)}\right). 
\end{align*} 
Now, by the definition of $s_{j,k}$ and $t_{j,k}$, we obtain
$$
 s_{j,k} + t_{j,k} = 2 \cos \tfrac{\t_{j,m}}{2}\cos \tfrac{\t_{k,m}}{2} = \sqrt{1+x_{j,m}}\sqrt{1+x_{k,m}},
$$
so that, from the explicit formula of $Q_{k,2m+1}$, 
\begin{align*} 
  &  Q_{\ell,2m+1}\left(\xb_{j,k}^{(1)}\right) =  a_{\a,\b, -\f12}^{(0,1)}  \sqrt{1+x_{j,m}}\sqrt{1+x_{k,m}} \\
     & \quad  \times \left[ p_m^{(\a,\b+1)}(x_{j,m}) p_\ell^{(\a,\b+1)}(x_{k,m})+
                  p_{m}^{(\a,\b+1)}(x_{k,m}) p_\ell^{(\a,\b+1)}(x_{j,m})\right]. \notag
\end{align*}
Consequently, by \eqref{Qxjk2} and Lemma \ref{hat-hn}, we deduce from the Gauss-Radau rule \eqref{Gauss-Radau}
that 
\begin{align*}
 \C_n \left[ Q_{q,2m+1} Q_{\ell, 2m+1}\right]  = 2  \left[a_{\a,\b, -\f12}^{(0,1)} \right]^2 \wh h_\ell \wh h_m \delta_{q,\ell}, 
    \quad 0 \le q, \ell \le m-1.
\end{align*} 
Putting these formulas together, we have shown that 
$$
\C_n \left[\CK_{2m+1}^*\left(\xb_{j,k}^{(1)}, \cdot\right) Q_{\ell, 2m+1}\right] 
    = 2 \left[a_{\a,\b, -\f12}^{(0,1)} \right]^2 \wh h_\ell \wh h_m b_{\ell,m} Q_{\ell, 2m+1}\left(\xb_{j,k}^{(1)}\right). 
$$
Comparing with \eqref{eqn-bmn}, it follows readily that 
$
\left[a_{\a,\b, -\f12}^{(0,1)} \right]^2 b_{\ell, m}   = (2   \wh h_\ell \wh h_m)^{-1}. 
$
In particular,  by Lemma \ref{hat-hn}, we see that $b_{0,m} = \ldots = b_{m-1,m}$, so that 
\begin{align*}
  \sum_{k=0}^m b_{k,m} Q_{k, 2 m+1}(x) Q_{k,2m+1}(y) = &\  b_{0,m} \sum_{k=0}^m Q_{k, 2 m+1}(x) Q_{k,2m+1}(y)\\
   &  +( b_{m,m} - b_{0,m})Q_{m, 2 m+1}(x) Q_{m,2m+1}(y).
\end{align*}
Since $K_m^{\a,\b}$ is the reproducing kernel of $\Pi_m^2$ in $L^2(W_{\a,\b,-\f12})$, it follows from the explicit 
formula of $Q_{k,2m+1} = {}_1Q_{k,2m+1}^{\a,\b,-\f12}$ in Proposition \ref{prop:OP} that 
\begin{align*}
    b_{0,m} \sum_{k=0}^{m} & Q_{k, 2 m+1}(x) Q_{k,2m+1}(y)\\
        & = \frac{1}{2 \wh h_0 \wh h_m} (x_1+x_2)(y_1+y_2)
                   \left[ K_{m}^{\a,\b+1} (s,t) - K_{m-1}^{\a,\b+1} (s,t) \right]. 
\end{align*} 
Putting these in \eqref{wtCKn} and using the explicit formula of $Q_{m, 2 m+1}$, we arrive at 
\eqref{eq:CK*Jacobi}. This completes the proof. 
\end{proof}
 
We present the Lagrange interpolation polynomial in the form of \eqref{eq:CK*Jacobi} because the reproducing 
kernels are fundamental tools for further study and they are explicitly known. As one consequence
of the explicit expression in the theorem, we can state a bound for the Lebesgue constant of the interpolation
operator $\CL_n^{(\a,\b)}$, which is defined as its operator norm $\|\CL_n^{(\a,\b)}\|_\infty$. 
  
\begin{thm}
Let $\a, \b \ge -1/2$. The Lebesgue constant of the Lagrange interpolation polynomial 
$\CL_n^{\a,\b}f$ based on the nodes of the near minimal cubature rule of degree $2n-1$, with
$n = 2m+1$, for $\CW_{\a,\b}$ satisfies 
\begin{equation} \label{LebesgueCLn}
  \|\CL_n^{\a,\b}\|_\infty = \CO(1) \begin{cases} n^{2 \max \{\a,\b\} + 1}, & \max\{\a,\b\} > -1/2, \\
                                (\log n)^2, & \max\{\a,\b\} = -1/2.  \end{cases}
\end{equation}
\end{thm}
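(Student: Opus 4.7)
The plan is to combine the explicit formulas of Theorem \ref{interp-CL*} with a Christoffel-function style Cauchy--Schwarz bound. From \eqref{ell-ijk}, the fundamental Lagrange polynomials are $\ell^{(i)}_{j,k}(\xb)=\lambda^{(i)}_{j,k}\,\CK^*_{2m+1}(\xb,\xb^{(i)}_{j,k})$, where $\lambda^{(i)}_{j,k}=\tfrac12\mu^{(\a,\b)}_{j,m}\mu^{(\a,\b)}_{k,m}$ (halved again when $j=k$) is the weight of the near minimal cubature rule \eqref{NearMin-} at the node $\xb^{(i)}_{j,k}$, so
\[
\|\CL_n^{\a,\b}\|_\infty=\sup_{\xb\in[-1,1]^2}\sum_{j,k,i}\lambda^{(i)}_{j,k}\bigl|\CK^*_{2m+1}(\xb,\xb^{(i)}_{j,k})\bigr|,
\]
and the whole question reduces to estimating this weighted discrete $\ell^1$ norm of the kernel.

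The first step is to establish the Christoffel-type identity
\[
\sum_{j,k,i}\lambda^{(i)}_{j,k}\bigl|\CK^*_{2m+1}(\xb,\xb^{(i)}_{j,k})\bigr|^2=\CK^*_{2m+1}(\xb,\xb),
\]
which follows because for each fixed $\xb$ the function $\yb\mapsto\CK^*_{2m+1}(\yb,\xb)$ lies in $\Pi^*_{2m+1}$ and is therefore reproduced by $\CL_n^{\a,\b}$; evaluating $\CL_n^{\a,\b} f=f$ at $\yb=\xb$ and using the symmetry of $\CK^*$ in its two $2$D arguments yields the identity. Combined with Cauchy--Schwarz and the trivial bound $\sum_{j,k,i}\lambda^{(i)}_{j,k}=\int\CW^*_{\a,\b}=\CO(1)$, this gives
\[
\|\CL_n^{\a,\b}\|_\infty\le C\,\Bigl(\sup_{\xb\in[-1,1]^2}\CK^*_{2m+1}(\xb,\xb)\Bigr)^{1/2}.
\]

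For the case $\max\{\a,\b\}>-\tfrac12$, the second step is to estimate the diagonal via \eqref{eq:CK*Jacobi}. Setting $x_1=\cos\t$, $x_2=\cos\phi$ and $X=(2x_1x_2,x_1^2+x_2^2-1)$, the 1D Jacobi kernel $K_m^{\a,\b+1}(X,X)$ and the $2$D kernel $\CK^{\a,\b}_{2m}$ from \cite[(4.23)]{X12a} reduce to sums of products of one-variable Jacobi kernels in the arguments $\cos(\t\pm\phi)$, to which the sharp one-variable bound $\sup_x K_n^{(\a,\b)}(x,x)=\CO(n^{2\max\{\a,\b\}+2})$ applies. A direct computation then gives $\sup_\xb\CK^*_{2m+1}(\xb,\xb)=\CO(n^{4\max\{\a,\b\}+2})$, and taking the square root produces the first branch of \eqref{LebesgueCLn}.

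The hard case is $\a=\b=-\tfrac12$, where Cauchy--Schwarz only yields $\CO(n)$ instead of the claimed $(\log n)^2$; so I would bypass Cauchy--Schwarz and estimate the weighted kernel sum termwise. The nodes and weights come from the product Gauss--Radau rule \eqref{productCR} quotiented by the symmetries $(x_1,x_2)\leftrightarrow(x_2,x_1)$ and $\xb\leftrightarrow-\xb$, and for the Chebyshev weight each factor in the decomposition \eqref{eq:CK*Jacobi} is a Dirichlet-type Chebyshev kernel in $\cos(\t\pm\phi)$. Undoing the symmetry turns the $2$D sum into an iterated one-variable sum of the form $\sum_k\mu^{(\a,\b)}_{k,m}\bigl|K^{(\a,\b)}_m(\cos\psi,\cos\t^{(\a+1,\b)}_{k,m})\bigr|$, which is the Lebesgue function of 1D Gauss--Radau Chebyshev interpolation and is known to be $\CO(\log n)$; two such one-variable sums multiply to $(\log n)^2$. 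The main obstacle is exactly this symmetry unfolding: writing the 2D discrete sum as an honest product of two one-dimensional Chebyshev Lebesgue sums and verifying that each of the cross terms arising in \eqref{eq:CK*Jacobi}, including the rank-one correction involving $\wh h_0^{(\a,\b)}/\wh h_m^{(\a,\b)}-1$, admits the same logarithmic estimate uniformly in $\xb$.
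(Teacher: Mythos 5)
Your reduction of the Lebesgue constant to the weighted discrete sum $\sup_{\xb}\sum_{j,k,i}\lambda^{(i)}_{j,k}\bigl|\CK^*_{2m+1}(\xb,\xb^{(i)}_{j,k})\bigr|$ is the same starting point as the paper's, and your Christoffel-type identity $\sum_{j,k,i}\lambda^{(i)}_{j,k}\bigl|\CK^*_{2m+1}(\xb,\xb^{(i)}_{j,k})\bigr|^2=\CK^*_{2m+1}(\xb,\xb)$ is valid (the kernel section $\CK^*_{2m+1}(\cdot,\xb)$ lies in $\Pi^*_{2m+1}$ and is reproduced by $\CL^{\a,\b}_n$). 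The gap is the claimed diagonal bound $\sup_\xb\CK^*_{2m+1}(\xb,\xb)=\CO(n^{4\max\{\a,\b\}+2})$. The diagonal is a sum of squares of the orthonormal basis of Proposition \ref{prop:OP}; at the corner $\t=\phi=0$ one has ${}_1Q^{\a,\b,-\f12}_{k,2\ell}(1,1)=2p_\ell^{(\a,\b)}(1)p_k^{(\a,\b)}(1)\asymp \ell^{\a+\f12}k^{\a+\f12}$, and summing over $k\le\ell\le m$ gives $\CK^*_{2m+1}\bigl((1,1),(1,1)\bigr)\asymp n^{4\a+4}$ (similarly $n^{4\b+4}$ at $(1,-1)$). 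So the true supremum is $n^{4\max\{\a,\b\}+4}$ --- consistent with your own remark that Cauchy--Schwarz yields only $\CO(n)=\CO\bigl((n^2)^{1/2}\bigr)$ in the Chebyshev case, but inconsistent with $n^{4\max+2}$, which would there give $\CO(1)$. Hence Cauchy--Schwarz produces $n^{2\max\{\a,\b\}+2}$, one full power of $n$ worse than the first branch of \eqref{LebesgueCLn}. The loss is structural: already in one variable the same argument gives $n^{\a+1}$ instead of the sharp $n^{\a+\f12}$ for Gauss--Jacobi interpolation, so no sharpening of the diagonal estimate can repair it.

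The paper therefore does not use Cauchy--Schwarz on the full kernel. It splits $\CK^*_{2m+1}$ according to \eqref{eq:CK*Jacobi}, treats the extra term involving $R^{\a,\b}_m(x)=(x_1+x_2)P_{m,m}^{\a,\b+1,-\f12}(X)$ by pointwise Jacobi estimates together with the Gauss--Radau rule (obtaining $\CO(n^{2\max\{\a,\b\}})$ for that piece), and estimates the remaining weighted $\ell^1$ sums termwise through the pointwise bounds on the one-variable kernels $k_m^{(\a,\b),0,1}$, exactly as in the $n=2m$ case of \cite{X12}, after observing $\mu^{(\a,\b)}_{k,m}\sim\l^{(\a,\b)}_{k,m}$. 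Your plan for $\a=\b=-\f12$ is in this spirit, but the double sum does not literally factor into two one-variable Gauss--Radau Lebesgue sums: in each product of the decomposition both factors depend on both indices through $\t_{j,m}\pm\t_{k,m}$, and carrying out that unfolding uniformly in $\xb$ is precisely the nontrivial content of \cite{X12} and \cite{BMV} that you defer. As written, the proposal establishes neither branch of \eqref{LebesgueCLn}.
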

 
\begin{proof}
The proof is similar to the estimate carried out in \cite{X12} for $n=2m$, except in one estimate. It is
easy to see that 
\begin{equation}\label{eq:Lebesgue}
\|\CL_n^{\a,\b} f\|_\infty \sim \max_{x \in [-1,1]^2} \sum_{k=0}^n {{\sum}'}_{j=0}^k \mu_{j,m}^{(\a,\b)}\mu_{k,m}^{(\a,\b)}
  \left | \CK_{2m+1}^* (x, \xb_{j,k}^{(1)})\right |. 
\end{equation}
From the explicit formula of $\CK_{2m+1}^*$ in \eqref{eq:CK*Jacobi}, we need to estimate the sum
$$
\Lambda_m:=\sum_{k=0}^n \sum_{j=0}^k \mu_{j,m}^{(\a,\b)}\mu_{k,m}^{(\a,\b)} 
      \left| R_m^{\a,\b}(x_1,x_2) R_m^{\a,\b}(s_{j,k}, t_{j,k}) \right|,
$$
where $R_m^{\a,\b} (x_1,x_2) = (x_1+x_2) P_{m,m}^{\a,\b,-\f12}(2x_1x_2,x_1^2+x_2^2-1)$. Using the explicit
formula of $P_{m,m}^{\a,\b,-\f12}$ in \eqref{OP-1/2} and the fact that, if $x_1 = \cos \t$ and $x_2 = \cos \phi$, then
$x_1+x_2 = \sqrt{1-\cos (\t-\phi)}\sqrt{1+\cos (\t+\phi)}$, we can deduce 
$$
|R_m^{\a,\b}(x_1,x_2)| \le \max_{-1 \le t\le 1} \left|\sqrt{1+t}\,p_m^{(\a,\b+1)}(t) \right |^2 \le c n^{2 \max\{\a,\b\} -1}
$$
from the well-known point-wise estimate of the Jacobi polynomials (cf. \cite[(4.3.4) and (7.32.5)]{Sz}). Using this
estimate and the Cauchy-Schwarz inequality, we conclude that
\begin{align*}
  \Lambda_m & \le  c n^{2 \max\{\a,\b\} -1} \bigg(\sum_{k=0}^n \sum_{j=0}^k \mu_{j,m}^{(\a,\b)}\mu_{k,m}^{(\a,\b)} 
      \left|R_m^{\a,\b}(s_{j,k}, t_{j,k}) \right| \bigg)^{\f12} \\
     & \le  c n^{2 \max\{\a,\b\} -1}  \sum_{k=0}^n \mu_{k,m}^{(\a,\b)}  (1+x_{k,m}) \left[p_{m}^{(\a,\b+1)}(x_{k,m})\right]^2. 
\end{align*}
Since $x_{k,m} = x_{k,m}^{(\a+1,b)}$, it follows from \eqref{eq:Jacobi3} and $p_{m}^{(\a,\b+1)}(x) = \CO(1) m^{\f12} 
P_{m}^{(\a,\b+1)}(x)$ (cf. \cite[(4.3.4)]{Sz}) that we can apply the Gauss-Radau rule of degree $2m$ to conclude
that 
$$
 \Lambda_m  \le  c n^{2 \max\{\a,\b\}} \int_{-1}^1 \left[P_{m-1}^{(\a+1,\b+1)}(t)\right]^2 w_{\a,\b+1}(t)dt 
     \le  c n^{2 \max\{\a,\b\}},
$$
where the last step follows from \cite[91, p. 391]{Sz}. This give the estimate of the sum over $Q_{m,m}^{\a,\b}$
part of $\CK_{2m+1}^* (x, \xb_{j,k}^{(1)})$ in the right hand side of \eqref{eq:Lebesgue}. 

Now, we claim that the sum over remaining parts of $\CK_{2m+1}^* (x, \xb_{j,k}^{(1)})$ can be deduced as in the proof of \cite{X12} for $n =2m$. In fact, it is easy to see that 
\begin{align*}
&(x_1+x_2)(y_1+y_2) K_m^{\a,\b+1,-\f12}(X,Y) \\
 &\quad = \frac12\left[k_m^{(\a,\b), 0,1}(\cos (\t_1-\t_2),\cos (\phi_1-\phi_2))
   k_m^{(\a,\b), 0,1}(\cos (\t_1+\t_2),\cos (\phi_1+\phi_2)) \right. \\
& \qquad \,\,  \left. + k_m^{(\a,\b), 0,1}(\cos (\t_1-\t_2),\cos (\phi_1+\phi_2))
   k_m^{(\a,\b), 0,1}(\cos (\t_1+\t_2),\cos (\phi_1-\phi_2))\right],
\end{align*}
where $k_m^{(\a,\b), 0,1}$ is defined at the bottom of \cite[p. 27]{X12}. Moreover, it is easy to verify that  
$\mu_{k,m}^{(\a,\b)} = \l_{k,m}^{(\a+1,\b)}/(1-x_{k,m}) \sim \l_{k,m}^{(\a,\b)}$ for $1 \le k \le m$, so that 
$\mu_{k,m}^{(\a,\b)}$ is comparable to $\l_{k,m}^{(\a,\b)}$ used in \cite{X12}. Since the main ingredient in
the proof of \cite{X12} is the pointwise estimate of the kernel $k_m^{(\a,\b), i,j}$, we can follow the proof there
to estimate the remaining sum over $1 \le k \le m$. The case $j=0$ of the sum can be easily handled, since 
$\mu_0 \sim n^{-\a-1}$ is small. This completes the proof. 
\end{proof}

In the case of $\a = \b = -1/2$, the order of the Lebesgue constant was determined in \cite{BMV} based 
on the explicit expression of the Lagrange interpolation polynomial given in \cite{X96}, which is different
from the one given here.

\end{document}